\documentclass[12pt]{amsart}

% Font:
% (Currently set to default Computer Modern)
%\usepackage{mathpazo}

% Encoding and languages:
\usepackage[utf8]{inputenc}
\usepackage[OT2,T1]{fontenc}
\usepackage[english]{babel}
% For font sizes:
\usepackage{lmodern}

% Standard AMS packages:
\usepackage{amsmath}
\usepackage{amsfonts}
\usepackage{amssymb}
\usepackage{amsthm}
\usepackage{mathrsfs}

% Calligraphic mathcal
%\usepackage[mathcal]{euscript}

% Sha support:
\usepackage[OT2,T1]{fontenc}
\DeclareSymbolFont{cyrletters}{OT2}{wncyr}{m}{n}
\DeclareMathSymbol{\Sha}{\mathalpha}{cyrletters}{"58}

% Interior and exterior linking:
\usepackage{url}
\usepackage[colorlinks=true]{hyperref}

% Enumeration and tables:
%\usepackage{enumitem}
\usepackage{multirow}

% Graphics and diagrams:
\usepackage{graphicx}
\usepackage{tikz-cd}
%\usepackage[all]{xy}
% What follows is ugly, but very useful to quickly TeX large diagrams:

% Better if statements:
\usepackage{etoolbox}

% Theorem and equation numbering:
\numberwithin{equation}{subsection}
%\renewcommand{\labelitemi}{{\scriptsize\selectfont$\bullet$}}

%\setlength{\fboxsep}{1.5pt}

% Theorem styles:
\theoremstyle{plain}

\newtheorem{proposition}[equation]{Proposition}
\newtheorem{lemma}[equation]{Lemma}
\newtheorem{corollary}[equation]{Corollary}

\theoremstyle{definition}

\theoremstyle{remark}
\newtheorem{remark}[equation]{Remark}

% Operator and symbol macros:
\def\defi{\textsf}
\def\ext{\!\mid\!}

\def\epsilon{\varepsilon}
\def\theta{\vartheta}
\def\phi{\varphi}

% Text macros:
\def\Belyi{Bely\u{\i}}

\def\Magma{\textsc{Magma}}

% Math operators:

\DeclareMathOperator{\im}{im}

\DeclareMathOperator{\PGamma}{P \Gamma}
\DeclareMathOperator{\PDelta}{P \Delta}

\DeclareMathOperator{\PSL}{PSL}

\DeclareMathOperator{\SL}{SL}

\DeclareMathOperator{\Stab}{Stab}

\DeclareMathOperator{\Sym}{Sym}

\DeclareMathOperator{\Tr}{Tr}

% Default pretty print macros:

\def\C{\mathbb{C}}

\def\F{\mathbb{F}}

\def\H{\mathcal{H}}

\def\O{\mathcal{O}}
\def\P{\mathbb{P}}
\def\Q{\mathbb{Q}}
\def\R{\mathbb{R}}

\def\Z{\mathbb{Z}}

% Overline macros:

\def\fbar{\overline{f}}

\def\xbar{\overline{x}}

\def\xbar{\overline{x}}
\def\Xbar{\overline{X}}
\def\ybar{\overline{y}}
\def\Ybar{\overline{Y}}
\def\Zbar{\overline{Z}}
\def\Gammabar{\overline{\Gamma}}
\def\lambdabar{\overline{\lambda}}

% Hat macros:

% Blackboard bold macros:

% Bold macros:

% Calligraphic macros:

%Fraktur macros:

% Pretty / script macros:

% Sans serif macros:

% New:

\setlength{\hfuzz}{4pt}

\usepackage{color}

\newcommand{\smat}[4]{
  \left(
    \begin{smallmatrix}
      #1 & #2\\
      #3 & #4
    \end{smallmatrix}
  \right)
}

\setcounter{tocdepth}{1}

%For pdf use
\hypersetup{
  pdfauthor   = {Jeroen Sijsling},
  pdftitle    = {Canonical models of arithmetic (1; inf)-curves},
  pdfsubject  = {},
  pdfkeywords = {},
  backref=true, pagebackref=true, hyperindex=true, colorlinks=true,
  breaklinks=true, urlcolor=blue, linkcolor=blue, citecolor=blue,
  bookmarks=true, bookmarksopen=true}

%\usepackage[foot]{amsaddr}
%\makeatletter
%\renewcommand{\email}[2][]{%
%  \ifx\emails\@empty\relax\else{\g@addto@macro\emails{,\space}}\fi%
%  \g@addto@macro\emails{#2}
%  \@ifnotempty{#1}{\g@addto@macro\emails{\textrm{ (#1)}}}
%}
%\makeatother

\begin{document}

\title{Canonical models of arithmetic $(1; \infty)$-curves}

\author{Jeroen Sijsling}
\address{Institut für Reine Mathematik, Universität Ulm, Helmholtzstrasse 18,
89081 Ulm, Germany}
\email{jeroen.sijsling@uni-ulm.de}

\date{\today}

\begin{abstract}
  In $1983$ Takeuchi showed that up to conjugation there are exactly $4$
  arithmetic subgroups of $\PSL_2 (\R)$ with signature $(1; \infty)$. Shinichi
  Mochizuki gave a purely geometric characterization of the corresponding
  arithmetic $(1; \infty)$-curves, which also arise naturally in the context of
  his recent work on inter-universal Teichmüller theory.
  
  Using \Belyi\ maps, we explicitly determine the canonical models of these
  curves. We also study their arithmetic properties and modular
  interpretations.
\end{abstract}

\keywords{Canonical models; Shimura curves; modular curves}

\maketitle

{\small\tableofcontents}

\setcounter{subsection}{1}

Let $(E, O)$ be a pointed curve of genus $1$ over $\C$, and let $e \in \Z_{\ge
2} \cup \left\{ \infty \right\}$. We can then construct the universal ramified
cover $U \to E$ that ramifies over $O$ with index $e$. As a complex analytic
space $U$ is isomorphic to the upper half plane $\H$, and the covering map $\H
\to E$ can be described as a quotient by a subgroup $\PGamma$ of $\PSL_2 (\R)$.
As a subgroup of $\PSL_2 (\R)$, the group $\PGamma$ is well-defined up to
conjugacy. We can ask when the pair $(E, O)$ and the ramification index $e$
determine an \defi{arithmetic} group $\PGamma$. This is the same as asking
which arithmetic subgroups $\PGamma$ of $\PSL_2 (\R)$ have signature $(1; e)$,
or alternatively a presentation of the form
\begin{equation}
  \PGamma = \langle \alpha, \beta \; \ext \; [ \alpha, \beta ]^e = 1 \rangle .
\end{equation}

This question was answered by Takeuchi \cite{takeuchi-1e}, who provided the
finite list of arithmetic groups of signature $(1; e)$. When $e$ is finite,
then the group $\PGamma$ has no cusps and comes from a non-trivial quaternion
algebra. The resulting quotient curves
\begin{equation}
  X (\PGamma) = \PGamma \backslash \H
\end{equation}
were studied in \cite{sijsling-1e}, where their canonical models in the sense
of Shimura \cite{shimura} were determined.

The present article considers the curves in Takeuchi's list that were not
treated in \cite{sijsling-1e}, namely those for which $e = \infty$. This is the
case for $4$ conjugacy classes of groups $\PGamma$. These groups have a single
conjugacy class of cusps, and are commensurable with the modular group $\PGamma
(1) = \PSL_2 (\Z)$. This means that classical methods are available to compute
canonical models of the corresponding quotient curves $X (\PGamma)$.

It should be mentioned that determining these $4$ curves did not require any
fundamentally new theory or concepts to be developed. Still, our computations
were rather involved in practice, and there seems to be no way to determine
equations for these curves by a short theoretical detour. We have used the
computer algebra system \Magma\ \cite{magma} to facilitate these calculations.
All the code needed to obtain the result in this article can be found online at
\cite{sijsling-1inf-code}. Along the way, we find some interesting phenomena:
an explicit Shimura curve whose Atkin-Lehner quotients has smaller canonical
field of moduli than the original curve (case III below), and a modular
description of the cover corresponding to the commutator subgroup of $\SL_2
(\Z)$ (case IV below).

However, the main motivation for this computation is not simply that these $4$
curves ``are there'', but that they play a role in the work of Shinichi
Mochizuki. In the articles \cite{mochizuki-corresps, mochizuki-canonical} he
showed that the property of these curves being arithmetic can be rephrased
purely geometrically, in terms of a certain category $\overline{\text{Loc}}
(X)$ introduced in \cite[Definition 2.1]{mochizuki-canonical} not having a
terminal object (or \defi{core}). In Mochizuki's later inter-universal
Teichmüller theory these curves are exceptional, in the sense that the theory
developed does not apply to these curves and the corresponding categories,
requiring them to be excluded from consideration. For instances of this
happening in the literature on this theory, we refer to Fesenko's expository
work \cite[Footnote 27]{fesenko} as well as the original work of Mochizuki
\cite[Definition 3.1(d)]{mochizuki-iut}.

Because of this, the question of determining these curves was raised by Fesenko
at the
\href{https://www.maths.nottingham.ac.uk/personal/ibf/files/symcor.iut.html}{workshop
on inter-universal Teichmüller theory} at the University of Oxford in December
2015. This question then went unanswered until the models in this article were
found.

We will use classical methods, as well as the more recent methods by Shimura
\cite{shimura}, to determine the canonical models of the $4$ arithmetic $(1;
\infty)$-curves in Takeuchi's list. Moreover, we describe the correspondence of
these curves with the classical modular curve $X (1)$, and we give the
arithmetic and modular properties of these canonical models.

A main tool that we will use is that of \defi{\Belyi\ maps}. For a survey that
shows the ubiquity of these maps, we refer to \cite{sv-survey}; they also play
a significant role in \cite{mochizuki-iut}. Expressed briefly, given an
arithmetic group $\PGamma$ commensurable with a triangle group $\PDelta$, the
inclusion of groups $\PGamma \cap \PDelta \subset \PDelta$ gives rise to a
\Belyi\ map $X (\PGamma \cap \PDelta) \to X (\PDelta)$, which is a cover of a
projective line ramified above at most three points. The combinatorics of this
inclusion yield equations for $X (\PGamma)$ via algorithms such as those in
\cite{kmsv}, and the correspondence with the curve $X (\PDelta)$ can be used to
obtain significant arithmetic information regarding the curve $X (\PGamma)$, for
instance properties of its canonical model. These considerations apply in
particular to the classical case $\PDelta = \PGamma (1) = \PSL_2 (\Z)$ in which
we find ourselves.

Section \ref{sec:uniford} introduces the relevant groups and their associated
orders. Section \ref{sec:belyi} gives the resulting \Belyi\ maps, which are
used in Section \ref{sec:canmod} to determine the canonical models and their
arithmetic properties. Finally, the modular interpretation of these models is
described in Section \ref{sec:modular}.

\textbf{Acknowledgments.} I cordially thank Ivan Fesenko for pointing out the
relevance of the curves in this article; without his interest this paper would
not have existed. Further heartfelt thanks go out to John Voight for sharing
his wisdom on dessins, canonical models and conductors.

\section{Uniformizations and orders}\label{sec:uniford}

\setcounter{subsection}{1}

In this section, and in fact throughout the article, our exposition will be
rather condensed; a more comprehensive exposition of these topics is in
\cite{takeuchi-1e, sijsling-thesis, sijsling-1e}.

In \cite[Proposition 3.1, Theorem 3.4]{takeuchi-1e}, Takeuchi describes his
$(1; \infty)$-groups as follows.

\begin{lemma}
  Let $\PGamma \subset \PSL_2 (\R)$ be an arithmetic $(1; \infty)$-group. Then
  the inverse image $\Gamma$ of $\PGamma$ in $\SL_2 (\R)$ is generated by two
  elements $\alpha$ and $\beta$ whose commutator is parabolic and for which the
  traces of $\alpha$, $\beta$ and $\alpha \beta$ are given in Table
  \ref{tab:trtr}. Conversely, these traces determine $\Gamma$ up to $\SL_2
  (\R)$-conjugacy.
\end{lemma}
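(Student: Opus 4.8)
The plan is to recover the analysis of Takeuchi \cite{takeuchi-1e} in four steps: a geometric normalization, the arithmeticity criterion, a finiteness argument, and a rigidity argument for the converse. First I would unwind the signature. A Fuchsian group $\PGamma$ of signature $(1;\infty)$ acts on $\H$ with quotient a once-punctured torus, so it is free of rank two on generators $\alpha,\beta$ whose unique peripheral element $[\alpha,\beta]$ is parabolic; this is the $e=\infty$ case of the presentation in the introduction. Lifting to $\SL_2(\R)$, the preimages of $\alpha$ and $\beta$ are only defined up to sign, but the commutator $[\alpha,\beta]$ has a sign-independent lift, which is parabolic and hence has trace $-2$. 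Writing $x=\tr\alpha$, $y=\tr\beta$, $z=\tr(\alpha\beta)$, the Fricke identity $\tr[\alpha,\beta]=x^2+y^2+z^2-xyz-2$ then places $(x,y,z)$ on the Markov-type cubic surface $x^2+y^2+z^2=xyz$. I would also record the residual ambiguity in the triple: changing a lift flips an even number of the signs of $x,y,z$, and replacing $(\alpha,\beta)$ by another generating pair acts through the Nielsen (Markov) moves. Fixing a normal form under these operations is precisely what Table~\ref{tab:trtr} does.

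Next I would invoke Takeuchi's arithmeticity criterion: a finite-covolume Fuchsian group $\Gamma$ is arithmetic if and only if its invariant trace field $k$, generated over $\Q$ by $x^2$, $y^2$, $z^2$ and $xyz$, is totally real, every trace is an algebraic integer, and for each embedding $\sigma$ of $k$ into $\R$ other than the given one the invariant quaternion algebra is ramified at $\sigma$, so that $\sigma(\Gamma)$ is relatively compact. For the triple this forces $x,y,z$ to be totally real algebraic integers with $\sigma x,\sigma y,\sigma z\in[-2,2]$ for all such $\sigma$. To reduce to a finite list I would use that $\PGamma\backslash\H$ has hyperbolic area exactly $2\pi$, fixed by the signature, together with Borel's finiteness theorem that only finitely many arithmetic Fuchsian groups have covolume below a given bound, up to conjugacy; among these finitely many groups Takeuchi's enumeration singles out the four of signature exactly $(1;\infty)$, and computing $(\tr\alpha,\tr\beta,\tr(\alpha\beta))$ for each of them from its explicit uniformization produces Table~\ref{tab:trtr}. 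I expect this enumeration---in effect Takeuchi's principal work---to be the main obstacle; the surrounding steps are formal.

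For the converse, any triple on $x^2+y^2+z^2=xyz$ has $\tr[\alpha,\beta]=-2\neq 2$, so the associated representation of the free group $F_2$ on two generators is irreducible: a reducible pair would be simultaneously upper-triangularizable, with unipotent, trace-$2$ commutator. By the classical rigidity of $\SL_2$-representations---an irreducible representation of $F_2$ into $\SL_2$ is determined up to conjugacy by the triple $(\tr\alpha,\tr\beta,\tr(\alpha\beta))$---the pair $(\alpha,\beta)$, and hence $\Gamma=\langle\alpha,\beta\rangle$, is determined up to conjugacy by the traces in Table~\ref{tab:trtr}. Since those entries are real and $\Gamma$ is Fuchsian, the conjugating element can be taken in $\SL_2(\R)$, which yields the stated uniqueness.
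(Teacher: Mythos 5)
The paper itself offers no proof of this statement: it is quoted directly from Takeuchi's Proposition~3.1 and Theorem~3.4, so the only thing to measure your attempt against is Takeuchi's own argument. Your skeleton matches it faithfully: the once-punctured-torus presentation makes $\PGamma$ free on $\alpha,\beta$ with peripheral commutator; the Fricke identity $\tr[\alpha,\beta]=x^2+y^2+z^2-xyz-2$ together with $\tr[\alpha,\beta]=-2$ places $(x,y,z)=(\tr\alpha,\tr\beta,\tr\alpha\beta)$ on the surface $x^2+y^2+z^2=xyz$ (all four rows of Table~\ref{tab:trtr} do satisfy this, e.g.\ $5+20+25=50=\sqrt{5}\cdot 2\sqrt{5}\cdot 5$); Takeuchi's arithmeticity criterion supplies integrality of the traces and the bound $|\sigma x|,|\sigma y|,|\sigma z|\le 2$ at embeddings $\sigma$ nontrivial on the invariant trace field; and Fricke rigidity of irreducible two-generator $\SL_2$-representations gives the converse (your irreducibility argument via $\tr[\alpha,\beta]=-2\neq 2$ is correct; the only point deserving a word is the passage from $\GL_2(\R)$-conjugacy, which is what a real character gives a priori, to $\SL_2(\R)$-conjugacy, and the unproved assertion that the parabolic commutator has trace $-2$ rather than $+2$).

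The genuine gap is the enumeration itself, which is the entire content of the table. Writing that ``Takeuchi's enumeration singles out the four of signature exactly $(1;\infty)$'' is circular --- that enumeration \emph{is} the lemma --- and Borel's covolume finiteness only says the list is finite, not what it is. The good news is that your own reduction closes the gap with little additional work: for $\sigma$ nontrivial on the invariant trace field, the constraints $|\sigma x|,|\sigma y|,|\sigma z|\le 2$ and $(\sigma x)^2+(\sigma y)^2+(\sigma z)^2=(\sigma x)(\sigma y)(\sigma z)$ force $\sigma x=\sigma y=\sigma z=0$ (by AM--GM the product would otherwise be at least $27$, against the bound $8$); since $x^2$ is a nonzero algebraic integer in the invariant trace field (nonzero because $\PGamma$ is torsion-free), its norm cannot vanish, so the invariant trace field is $\Q$ and $x^2,y^2,z^2,xyz$ are positive rational integers. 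The equation then becomes a Markov-type Diophantine equation over $\Z$, and the Vieta/Nielsen reduction you already describe cuts its solution set down to the four normalized triples of Table~\ref{tab:trtr}. Without some such argument the proof never produces the table, and the table is the statement.
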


\begin{table}[h]
  \begin{center}
    \begin{tabular}{c|ccc}
      Case & $\Tr (\alpha)$ & $\Tr (\beta)$ & $\Tr (\alpha \beta)$ \\
      \hline
      I    & $\sqrt{5}$     & $2 \sqrt{5}$  & $5$                  \\
      II   & $\sqrt{6}$     & $2 \sqrt{3}$  & $3 \sqrt{2}$         \\
      III  & $2 \sqrt{2}$   & $2 \sqrt{2}$  & $4$                  \\
      IV   & $3$            & $3$           & $3$                  \\
    \end{tabular}
  \end{center}
  \caption{Trace triples.}
  \label{tab:trtr}
\end{table}

As in \cite[\S 1.2]{sijsling-thesis}, we construct $\alpha$ and $\beta$ by
taking
\begin{equation}
  \alpha =
  \begin{pmatrix}
    \lambda & 0 \\
    0 & \lambda^{-1}
  \end{pmatrix},
  \beta =
  \begin{pmatrix}
    a & b \\
    b & d
  \end{pmatrix} .
\end{equation}
Here $\lambda$ is the largest real solution of $\lambda^{-1} + \lambda = \Tr
(\alpha)$. Having determined $\lambda$, we recover $a$ and $d$ by from $\Tr
(\beta)$ and $\Tr (\alpha \beta)$ by solving a system of linear equations.
Finally, we choose $b$ to be the positive real solution of $a d - b^2 = 1$.

The matrices $\alpha$, $\beta$ thus obtained are far from having entries in
$\Z$. However, in the proof of \cite[Theorem 3.4]{takeuchi-1e} (and see also
his work \cite{takeuchi-char}) Takeuchi shows the following more precise
statement.

\begin{lemma}\label{lem:B}
  Let $S$ be the set $\left\{ 1, \alpha^2, \beta^2, \alpha^2 \beta^2 \right\}$,
  and let $\Gamma^{(2)} = \langle \gamma^2 : \gamma in \Gamma \rangle$.
%  be the kernel of the map
%  \begin{equation}
%    \begin{aligned}
%      \Gamma & \to \Z / 2 \Z \times \Z / 2 \Z \\
%      \alpha & \mapsto (1, 0) \\
%      \beta  & \mapsto (0, 1) .
%    \end{aligned}
%  \end{equation}
  \begin{enumerate}
    \item The group $\Gamma^{(2)}$ has signature $(1; \infty^4)$, and the image
      $\PGamma^{(2)}$ of $\Gamma^{(2)}$ in $\PSL_2 (\R)$ is normal in the image
      $\PGamma \subset \PSL_2 (\R)$. The quotient group $\PGamma /
      \PGamma^{(2)}$ is a Klein Vierergruppe generated by $\alpha$ and $\beta$.
    \item The $\Q$-vector space generated by $S$ is a quaternion algebra over
      the rationals.
    \item The $\Z$-module generated by $\Gamma^{(2)}$ is an order $\O$ of
      $B$, and the group $\O^1$ of elements of $\O$ with reduced norm $1$
      contains $\Gamma^{(2)}$.
  \end{enumerate}
\end{lemma}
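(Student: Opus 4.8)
The plan is to prove (i) by pure group theory and to reduce (ii)--(iii) to the behaviour of traces of elements of $\Gamma^{(2)}$. For (i): a Fuchsian group of signature $(1;\infty)$ is abstractly the fundamental group of a once-punctured torus, hence free of rank $2$ on $\alpha,\beta$, with $[\alpha,\beta]$ generating the stabiliser of the unique cusp. The subgroup generated by all squares is characteristic, so $\PGamma^{(2)}$ is normal in $\PGamma$, and $\PGamma/\PGamma^{(2)}$ is the largest exponent-$2$ quotient of a rank-$2$ free group, i.e.\ the Klein four-group on the images of $\alpha,\beta$; passing from $\PGamma^{(2)}$ to $\Gamma^{(2)}$ is harmless since $\overline{\gamma}^2=\overline{\gamma^2}$. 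Because $[\alpha,\beta]$ is a commutator it already lies in $\PGamma^{(2)}$, so the cusp of $X(\PGamma)$ is unramified in the covering $X(\PGamma^{(2)})\to X(\PGamma)$, which is otherwise a genuine degree-$4$ covering as both groups are torsion-free; hence $\PGamma^{(2)}$ has exactly $4$ cusps, and since the Euler characteristic of the open curve is multiplied by $4$ ($-1\mapsto -4$), $X(\PGamma^{(2)})$ has genus $1$. This is the signature $(1;\infty^4)$.

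For (ii), set $x=\Tr\alpha$, $y=\Tr\beta$, $z=\Tr(\alpha\beta)$. From Table \ref{tab:trtr} one reads off that in every case $x^2,y^2,z^2\in\Q$ while $x,y,z$ are algebraic integers, and since $[\alpha,\beta]$ is parabolic, $\Tr[\alpha,\beta]=-2$, so the Fricke identity $\Tr[\alpha,\beta]=x^2+y^2+z^2-xyz-2$ forces $xyz=x^2+y^2+z^2\in\Q$ as well. Now $\Tr(\alpha^2)=x^2-2$, $\Tr(\beta^2)=y^2-2$, and a short trace computation gives $\Tr(\alpha^2\beta^2)=z^2+2$, so all three are rational; combined with Cayley--Hamilton and the identity $uv+vu=\Tr(u)\,v+\Tr(v)\,u+(\Tr(uv)-\Tr(u)\Tr(v))\,I$ in $M_2$, this shows the $\Q$-span $B$ of $S=\{1,\alpha^2,\beta^2,\alpha^2\beta^2\}$ is closed under multiplication, hence a $\Q$-subalgebra of $M_2(\R)$. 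As $\alpha^2$ is diagonal and non-scalar while $\beta^2$ has non-zero off-diagonal entries, $\alpha^2$ and $\beta^2$ do not commute, and a direct check with the explicit matrices shows $S$ is $\R$-linearly independent; hence $B\otimes_\Q\R=M_2(\R)$, and $B$ is central simple of dimension $4$ over $\Q$, i.e.\ a quaternion algebra.

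For (iii), the step I expect to be the main obstacle is showing that every element of $\Gamma^{(2)}$ lies in $B$ and has integral reduced trace. An element of $\Gamma^{(2)}$ is a product of squares, hence a word in $\alpha,\beta$ with an even number of $\alpha^{\pm1}$ and an even number of $\beta^{\pm1}$; its Fricke trace polynomial therefore lies in $\Z[x,y,z]$ and is invariant under each sign change $(x,y,z)\mapsto(\varepsilon_1 x,\varepsilon_2 y,\varepsilon_1\varepsilon_2 z)$, $\varepsilon_i=\pm1$, so it belongs to $\Z[x^2,y^2,z^2,xyz]$; evaluated at our $x,y,z$ it then lies in $\Q$ and is an algebraic integer, hence in $\Z$ (in particular the invariant trace field of $\Gamma$ is $\Q$). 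Expanding an element of $\Gamma^{(2)}$ in the $\R$-basis $S$ and inverting the nondegenerate Gram matrix $(\Tr(s_is_j))$ — now with integer entries — one sees its coordinates have denominators dividing the fixed integer $N=\det(\Tr(s_is_j))$; thus $\Gamma^{(2)}\subseteq B$ and the $\Z$-module $\O$ it generates sits inside $\tfrac1N\Lambda$, where $\Lambda$ is the $\Z$-span of $S$, so $\O$ is a finitely generated full $\Z$-lattice. Since $\Gamma^{(2)}$ is closed under multiplication, $\O$ is a subring, and it contains the $\Q$-basis $S$, so $\O$ is an order of $B$; finally each $g\in\Gamma^{(2)}$ has reduced norm $\det g=1$ and lies in $\O$, so $\Gamma^{(2)}\subseteq\O^1$. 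All of (ii)--(iii) can alternatively be extracted from the general theory of invariant trace fields and invariant quaternion algebras of finitely generated Fuchsian groups, once Takeuchi's trace data are in hand.
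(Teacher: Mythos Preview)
Your argument is correct. The paper, however, does not actually prove this lemma: it simply records that the statement is established in Takeuchi's work \cite[Theorem 3.4]{takeuchi-1e} and \cite{takeuchi-char}, and moves on. So the two ``proofs'' differ in kind rather than in detail.

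What you do that the paper does not is give a self-contained argument. For (i) you exploit that $\PGamma$ is free of rank $2$, identify $\PGamma^{(2)}$ as the kernel of the exponent-sum map to $(\Z/2\Z)^2$, and read off the signature from the Euler characteristic and the fact that $[\alpha,\beta]\in\PGamma^{(2)}$. For (ii)--(iii) you specialise the machinery of invariant trace fields to this one situation: the Fricke identity together with $\Tr[\alpha,\beta]=-2$ gives $xyz=x^2+y^2+z^2\in\Q$, the parity argument on exponent sums shows every trace of an element of $\Gamma^{(2)}$ lies in $\Z[x^2,y^2,z^2,xyz]\subset\Z$, and the integral Gram matrix of the trace form then pins down $\O=\Z[\Gamma^{(2)}]$ inside a lattice commensurable with $\Z S$. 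This is exactly the mechanism behind Takeuchi's general theorems, made concrete; your closing remark that (ii)--(iii) ``can alternatively be extracted from the general theory of invariant trace fields'' is in fact what the paper does. The benefit of your route is that a reader sees \emph{why} the $\Q$-span of $S$ closes up and why the order is finitely generated, without consulting external references; the benefit of the paper's route is brevity, since the result is a direct specialisation of Takeuchi's classification criteria. One small point: your claim that $S$ is $\R$-linearly independent is used twice (once for $\dim_\Q B=4$, once for invertibility of the Gram matrix), and ``a direct check'' suffices but could be replaced by the cleaner observation that $\alpha^2,\beta^2$ generate a non-elementary, hence irreducible, subgroup of $\SL_2(\R)$, which forces $1,\alpha^2,\beta^2,\alpha^2\beta^2$ to be a basis of $M_2(\R)$.
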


\begin{remark}
  In \cite[Proposition 3.1]{takeuchi-1e} Takeuchi also gives a list of
  generators of $\Gamma^{(2)}$, so that we can explicitly construct the order
  $\O$ in part (ii) of the Lemma.
\end{remark}

In our cases $\Gamma$ has a cusp, hence so does $\Gamma^{(2)}$. Therefore $\O$
is in fact conjugate to an order in the trivial quaternion algebra $M_2 (\Q)$
over $\Q$. This simplifies our considerations, since the original matrices
$\alpha$ and $\beta$ will have entries in an extension of $\Q$; in case II this
extension is even a biquadratic number field. To find an order in $M_2 (\Q)$ to
which $\O$ is conjugate, we can let $\Gamma$ act on $S$ by left multiplication
and identify the resulting associative algebra with $M_2 (\Q)$ by using the
algorithms in \cite{voight-matrix}.

We have slightly tweaked this approach; instead of taking $\O = \Z [
\Gamma^{(2)} ]$, we take $\O' = \Z [ \Gamma' ]$, where $\Gamma'$ is the largest
subgroup between $\Gamma^{(2)}$ and $\Gamma$ that generates some order in the
algebra $B$.

\begin{proposition}
  In the cases I-IV, the largest groups $\Gamma'$ inbetween $\Gamma^{(2)}$ and
  $\Gamma$ that generate an order in the matrix algebra $B$ in Lemma
  \ref{lem:B} are given in Table \ref{tab:gammap}.
\end{proposition}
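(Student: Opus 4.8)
The plan is to proceed case by case, since there are only four groups $\Gamma$ to consider and the combinatorial structure is small. By Lemma~\ref{lem:B}(i), the quotient $\PGamma/\PGamma^{(2)}$ is a Klein four-group, so there are exactly five groups sitting between $\Gamma^{(2)}$ and $\Gamma$: the group $\Gamma^{(2)}$ itself, the three index-$2$ subgroups of $\Gamma$ containing $\Gamma^{(2)}$ (corresponding to the three order-$2$ subgroups of the Vierergruppe, generated respectively by the images of $\alpha$, $\beta$, and $\alpha\beta$), and $\Gamma$ itself. So the task reduces to deciding, for each of these candidate groups $\Gamma'$, whether the $\Z$-module $\Z[\Gamma']$ spanned by $\Gamma'$ inside $B$ is an \emph{order}, i.e.\ whether it is finitely generated as a $\Z$-module and closed under multiplication; one then takes the largest $\Gamma'$ for which this holds.

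First I would set up explicit matrix representatives. Using the recipe recalled after Lemma~\ref{lem:B} (diagonal $\alpha$ with entry $\lambda$ the largest real root of $\lambda+\lambda^{-1}=\Tr(\alpha)$, symmetric $\beta=\smat{a}{b}{b}{d}$ with $a,d$ determined linearly by $\Tr(\beta),\Tr(\alpha\beta)$ and $b>0$ solving $ad-b^2=1$), and then the isomorphism $B\xrightarrow{\sim}M_2(\Q)$ coming from the left-regular representation of $\Gamma$ on $S=\{1,\alpha^2,\beta^2,\alpha^2\beta^2\}$ together with the matrix-ring recognition algorithm of \cite{voight-matrix}, I can write $\alpha^2,\beta^2,\alpha^2\beta^2$ and then $\alpha,\beta,\alpha\beta$ as concrete matrices in $M_2(\Q)$ (the latter three having entries in a quadratic or, in case II, biquadratic field a priori, but acting $\Q$-rationally on $S$). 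For each candidate $\Gamma'$, its $\Z$-span is generated over $\Z$ by the finitely many products of length $\le 4$ of the chosen generators (since $B$ is $4$-dimensional, any element is a $\Q$-combination of four such, and one checks the module is already closed at this stage); concretely one computes the $\Z$-module $M=\Z[\Gamma']\subset M_2(\Q)$ as the span of a small explicit generating set and tests whether $M\cdot M\subseteq M$ and whether $M$ has full rank $4$. This is a finite linear-algebra computation, carried out in \Magma\ as indicated in the introduction.

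The main obstacle, and the reason the statement is a genuine computation rather than a formality, is that whether $\Z[\Gamma']$ is multiplicatively closed is sensitive to the arithmetic of the specific trace triples: passing from $\Gamma^{(2)}$ (always an order by Lemma~\ref{lem:B}(iii)) up to a larger $\Gamma'$ introduces the generators $\alpha$, $\beta$ or $\alpha\beta$, whose entries involve $\lambda$ and $b$, and products like $\alpha\cdot\beta^2$ may fail to lie in the previous $\Z$-span; conversely in the more symmetric cases (III and especially IV, where $\Tr(\alpha)=\Tr(\beta)=\Tr(\alpha\beta)$) one expects to be able to climb all the way, possibly to $\Gamma$ itself. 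So the heart of the proof is the case analysis: in each case I would exhibit the explicit generating matrices for the maximal admissible $\Gamma'$, verify closure under multiplication by a direct product check on generators, and verify that no strictly larger intermediate group works by producing, for each of the remaining candidates, an explicit product of generators that escapes the $\Z$-span. The resulting $\Gamma'$ in each case is then recorded in Table~\ref{tab:gammap}, which completes the proof.
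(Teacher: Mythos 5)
Your reduction to the five candidate groups via the Klein four-group $\PGamma / \PGamma^{(2)}$ is exactly the paper's first step, and your computation would return the right table, but the criterion you then apply is more roundabout than necessary and partly misplaced. Since each candidate $\Gamma'$ is a group, its $\Z$-span is automatically closed under multiplication, so the test $M \cdot M \subseteq M$ can never fail; the only thing that can go wrong is that $\Z [\Gamma']$ fails to lie in the four-dimensional $\Q$-algebra $B = \Q S$ at all. Your parenthetical claim that $\alpha, \beta, \alpha \beta$ ``act $\Q$-rationally on $S$'' is precisely the assertion under examination, not a given: from $\gamma + \gamma^{-1} = \Tr (\gamma) \cdot 1$ one sees that if $\Tr (\gamma) \notin \Q$ then $\Z [ \Gamma^{(2)}, \gamma ]$ contains $\Z [ \Tr (\gamma) ] \cdot \O$ and has rank $8$, so the left-regular representation on $\Q S$ is not even defined for such $\gamma$ (your rank test would still detect the failure, which is why the proposal survives). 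The paper's proof goes straight to the real point: it suffices to decide which of $\alpha$, $\beta$, $\alpha \beta$ is a $\Q$-linear combination of the basis $S$, which is immediate linear algebra because $S$ is also an $\R$-basis of $M_2 (\R)$. This viewpoint makes Table \ref{tab:gammap} legible directly from Table \ref{tab:trtr}: rationality of the trace is a necessary condition for membership in $B$, and it singles out $\alpha \beta$ in cases I and III, nothing in case II, and all three elements in case IV; the linear algebra then confirms that these elements do lie in $\Q S$. Conversely, once $\gamma \in B$, normality of $\Gamma^{(2)}$ in $\Gamma$ and $\gamma^2 \in \Gamma^{(2)}$ give $\Z [ \Gamma^{(2)}, \gamma ] = \O + \O \gamma + \dots$, a finitely generated full lattice, so no further closure or finiteness check is needed.
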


\begin{table}[h]
  \begin{center}
    \begin{tabular}{c|c}
      Case & Largest $\Gamma'$ generating an order \\
      \hline
      I    & $\langle \Gamma^{(2)}, \alpha \beta \rangle$ \\
      II   & $\Gamma^{(2)}$ \\
      III  & $\langle \Gamma^{(2)}, \alpha \beta \rangle$ \\
      IV   & $\Gamma$ \\
    \end{tabular}
  \end{center}
  \caption{Groups used to generate orders.}
  \label{tab:gammap}
\end{table}

\begin{proof}
  The quotient $\PGamma / \PGamma^{(2)}$ is a Klein Vierergruppe generated by
  $\alpha$ and $\beta$, so it suffices to see which of $\alpha, \beta, \alpha
  \beta$ can be written as a $\Q$-linear combination of the basis $S$ of $B$,
  which is a straightforward computation as $S$ is also an $\R$-basis of $M_2
  (\R)$.
\end{proof}

The groups $\Gamma'$ are indicated in Table \ref{tab:gammap}. After
trivializing the quaternion algebras $B$ involved, we have conjugated the
corresponding orders $\O$ of $M_2 (\Q)$ into $M_2 (\Z)$ by an \emph{ad hoc}
calculation. In general, one could determine a maximal order of $M_2 (\Q)$
containing $\O$. Such a maximal order is conjugate to $M_2 (\Z)$, so that an
explicit conjugating element can be used to map $\O$ into $M_2 (\Z)$.

Identifying the groups $\Gamma'$ and $\Gamma$ with their images under the
preceding conjugations, we now have reduced to a simpler situation, where we
have an inclusion $\Gamma^{(2)} \subset \Gamma'$ as well as inclusions
\begin{equation}
  \Gamma \supset \Gamma' \subset \Z [\Gamma']^1 = \O^1 \subset \SL_2 (\Z) .
\end{equation}
Proposition \ref{cor:inter} will show that in fact $\Gamma' = \Gamma \cap \SL_2
(\Z)$. Note that it is \emph{not} clear that $\Gamma'$ equals $\Z [\Gamma']^1$;
in fact in case IV the order $\O'$ is a non-Eichler order of index $4$ whose
group of units is of index $2$ in $\SL_2 (\Z)$ and hence properly contains in
$\Gamma$. In all the other cases, Proposition \ref{prop:gpvszgp1} will show
that indeed $\O'^1 = \Gamma'$. In fact, in case IV we still have that $\Z
[\Gamma^{(2)}]^1 = \Gamma^{(2)}$, so that we could have worked with the
original order $\O = \Z [\Gamma^{(2)}]$ defined by $\Gamma^{(2)}$ instead.
Regardless, we will see below that case IV is less complicated than the others.

\section{\Belyi\ maps}\label{sec:belyi}

\subsection{Monodromy triples}

The reason for our using the unit group $\O'^1$ is that it is less complicated
to determine whether a given element $\gamma \in \SL_2 (\Z)$ belongs to this
group; it suffices to see whether $\gamma \in \O'$, which gives a
straightforward additive criterion. In this way, we can quickly determine the
monodromy of the cover (and in fact \Belyi\ map) corresponding to the inclusion
$\O'^1 \subset \SL_2 (\Z)$ by using the standard generators $S$, $T$ and $(S
T)^{-1}$ of $\SL_2 (\Z)$ of order $2, 3, \infty$ and using the methods of
\cite[\S 3]{kmsv}.

\begin{remark}
  It is also possible to work directly with one of the groups $\Gamma^{(2)}$,
  $\Gamma'$ or $\Gamma$ by using Dirichlet domains. For details, we again refer
  to \cite[\S 3]{kmsv}, especially Algorithms 3.8 and 3.14.
\end{remark}

In case IV the group $\Gamma$ is itself a subgroup of $\SL_2 (\Z)$. Considering
the possible ramification then shows that it is of index $6$, and that the
cover $X (\Gamma) \to X (1)$ has ramification type $(3^2), (2^3), (6^1)$ above
$j = 0, 1728, \infty$ respectively. Geometrically, this is nothing but the
quotient of an elliptic curve with $j$-invariant $0$ by its automorphism group;
in the next section, we determine the correct model of this cover over its
canonical field of definition $\Q$.

In the other cases, we have to put in more effort. The permutation triples
$(\sigma_0, \sigma_1, \sigma_{\infty})$ that describe the monodromy of the
covers defined by the inclusion $\O'^1 \subset \SL_2 (\Z)$ above $j = 0, 1728,
\infty$ are given in Table \ref{tab:monod}. We have followed the convention
$\sigma_{\infty} \sigma_1 \sigma_0 = 1$. Note that as mentioned above we get a
subgroup of index $2$, not $6$, in case IV. However, we have the following.

\begin{table}[h]
  \begin{center}
    \begin{tabular}{c|ccc}
      Case & & & Monodromy \\
      \hline
           & $\sigma_0$ & = & $(1, 3, 8)(2, 7, 4)(5, 11, 9)(6, 12, 10)$ \\
      I    & $\sigma_1$ & = & $(1, 2)(3, 5)(4, 6)(7, 10)(8, 9)(11, 12)$ \\
           & $\sigma_{\infty}$ & = & $(1, 4, 10, 2, 8, 11, 6, 7, 12, 5)(3, 9)$ \\
      \hline
           & $\sigma_0$ & = & $(1, 3, 8)(2, 7, 4)(5, 14, 9)(6, 15, 10)$ \\
           & & & $(11, 16, 12)(13, 19, 17)(18, 23, 20)(21, 22, 24)$ \\
      II   & $\sigma_1$ & = & $(1, 2)(3, 5)(4, 6)(7, 11)(8, 12)(9, 13)$ \\
           & & & $(10, 14)(15, 17)(16, 18)(19, 21)(20, 22)(23, 24)$ \\
           & $\sigma_{\infty}$ & = & $(1, 4, 10, 5)(2, 8, 16, 20, 21, 13, 14, 15, 19, 24, 18, 11)$ \\
           & & & $(3, 9, 17, 6, 7, 12)(22, 23)$ \\
      \hline
           & $\sigma_0$ & = & $(1, 3, 8)(2, 7, 4)(5, 11, 9)(6, 12, 10)$ \\
      III  & $\sigma_1$ & = & $(1, 2)(3, 5)(4, 6)(7, 11)(8, 10)(9, 12)$ \\
           & $\sigma_{\infty}$ & = & $(1, 4, 10, 3, 9, 6, 7, 5)(2, 8, 12, 11)$ \\
      \hline
           & $\sigma_0$ & = & $(1)$ \\
      IV   & $\sigma_1$ & = & $(1, 2)$ \\
           & $\sigma_{\infty}$ & = & $(1, 2)$ \\
      \hline
    \end{tabular}
  \end{center}
  \caption{Monodromy generators for $\O'^1 \subset \SL_2 (\Z)$}
  \label{tab:monod}
\end{table}

\begin{proposition}\label{prop:gpvszgp1}
  In the cases I-III, we have $\Z [\Gamma']^1 = \Gamma'$ in Table
  \ref{tab:gammap}.
\end{proposition}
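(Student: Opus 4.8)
The plan is to prove the equality $\O'^1 = \Gamma'$ in cases I--III by the ``index-counting via covering degrees'' method. We already have the chain of inclusions $\Gamma' \subseteq \O'^1 \subseteq \SL_2(\Z)$ from Section \ref{sec:uniford}, so it suffices to check that the two groups have the same index in $\SL_2(\Z)$ (or in $\PSL_2(\Z)$, after passing to images, since $-1$ lies in both). The index of $\O'^1$ in $\SL_2(\Z)$ equals the degree of the cover $X(\O'^1) \to X(1)$, which is the number $d$ of sheets in the monodromy data of Table \ref{tab:monod}: that is $d = 12$ in cases I and III, and $d = 24$ in case II. So the task reduces to computing $[\SL_2(\Z):\Gamma']$ independently and matching.

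First I would compute $[\PGamma : \PGamma']$ purely group-theoretically from the data already assembled. By Lemma \ref{lem:B}(i) the quotient $\PGamma/\PGamma^{(2)}$ is a Klein four-group, and by the Proposition preceding Table \ref{tab:gammap} the group $\Gamma'$ is $\langle \Gamma^{(2)}, \alpha\beta\rangle$ in cases I and III and $\Gamma^{(2)}$ in case II; hence $[\PGamma:\PGamma'] = 2$ in cases I, III and $= 4$ in case II. Second, I would compute the covolume (hyperbolic area) of $\PGamma$ from its signature $(1;\infty)$ via the Gauss--Bonnet formula: a genus-$1$ quotient with one cusp has area $2\pi(2\cdot 1 - 2 + 1) = 2\pi$, while $X(1)=\PSL_2(\Z)\backslash\H$ has area $\pi/3$. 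Therefore the ``virtual index'' (ratio of covolumes) of $\PGamma$ relative to $\PSL_2(\Z)$ is $6$, even though $\PGamma \not\subseteq \PSL_2(\Z)$. Combining, $\PGamma' \subseteq \PSL_2(\Z) \cap \PGamma$ has covolume $2$ (cases I, III) or $4$ (case II) times that of $\PGamma$, i.e. covolume $12\cdot(\pi/3)$ or $24\cdot(\pi/3)$; hence $[\PSL_2(\Z) : \PGamma'] = 12$ in cases I, III and $24$ in case II, matching $d$ exactly. Here I am also implicitly using Proposition \ref{cor:inter}, quoted in the excerpt, that $\Gamma' = \Gamma \cap \SL_2(\Z)$, so that $\Gamma' \subseteq \SL_2(\Z)$ and the index in question is finite and equals that covolume ratio.

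Finally, since $[\SL_2(\Z):\Gamma'] = d = [\SL_2(\Z):\O'^1]$ and $\Gamma' \subseteq \O'^1$, the two groups coincide, which is the assertion. The main obstacle I anticipate is not conceptual but bookkeeping: one must be sure that the monodromy degree $d$ in Table \ref{tab:monod} really computes $[\SL_2(\Z):\O'^1]$ and not some proper multiple or divisor --- i.e. that the permutation representation on cosets of $\O'^1$ is exactly the one tabulated, which rests on the ``straightforward additive criterion'' for membership in $\O'$ described at the start of Section \ref{sec:belyi} and on the correctness of the coset-enumeration in \cite[\S 3]{kmsv}. A secondary subtlety is the factor-of-$2$ from $\pm 1$: one checks that $-1 \in \Gamma^{(2)} \subseteq \Gamma'$ (true because $-1$ is a square in $\SL_2(\Z)$, e.g. $-1 = S^2$ with $S$ of order $4$ in $\SL_2(\Z)$), so that indices computed upstairs in $\SL_2$ and downstairs in $\PSL_2$ agree and the covolume comparison is legitimate. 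Given these checks, cases I--III all go through uniformly; case IV is genuinely different (the order is non-Eichler and $\O'^1 \supsetneq \Gamma'$), which is why it is excluded from the statement.
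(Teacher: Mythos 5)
Your argument is exactly the paper's: read the index of $\Z[\Gamma']^1$ in $\SL_2(\Z)$ off the degree of the permutation triple in Table \ref{tab:monod} and match it against the index of $\Gamma'$ computed from its covolume, the containment $\Gamma' \subseteq \Z[\Gamma']^1$ doing the rest, and your Gauss--Bonnet bookkeeping (area $2\pi$ for signature $(1;\infty)$, multiplied by $[\PGamma:\PGamma'] = 2$ or $4$, against $\pi/3$ for $X(1)$, giving $12$, $24$, $12$) is correct. The only wrinkle is your aside justifying $-I \in \Gamma^{(2)}$: the element $S$ of order $4$ lies in $\SL_2(\Z)$, not in $\Gamma$, whereas $\Gamma^{(2)}$ is generated by squares of elements of $\Gamma$, so that particular justification does not work --- but this $\pm I$ normalization point is glossed over in the paper's proof as well.
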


\begin{proof}
  This follows from Table \ref{tab:monod} since clearly $\Gamma' \subset \Z
  [\Gamma']^1$ and the index of $\Z [\Gamma']^1$, as read off from that table,
  coincides with the index of $\Gamma'$, as determined by the covolume of this
  group.
\end{proof}

\begin{corollary}\label{cor:inter}
  In all cases, we have $\Gamma' = \Gamma \cap \SL_2 (\Z)$.
\end{corollary}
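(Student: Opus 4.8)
The plan is to verify the inclusion $\Gamma' \subseteq \Gamma \cap \SL_2(\Z)$ directly and then close the gap with an index count. The forward inclusion is immediate: by construction $\Gamma' \subseteq \Gamma$, and after the conjugations performed at the end of Section \ref{sec:uniford} we have arranged that $\Gamma' \subseteq \O'^1 \subseteq \SL_2(\Z)$, so $\Gamma' \subseteq \Gamma \cap \SL_2(\Z)$ holds in all four cases. The substance is the reverse inclusion, and here I would separate case IV from cases I--III, since the structure of the argument is genuinely different.

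In cases I--III, I would argue by comparing indices inside $\SL_2(\Z)$. On the one hand, $[\SL_2(\Z) : \O'^1]$ is the degree of the \Belyi\ map read off from Table \ref{tab:monod}, and Proposition \ref{prop:gpvszgp1} identifies $\O'^1 = \Gamma'$, so $[\SL_2(\Z) : \Gamma']$ is known explicitly. On the other hand, $[\SL_2(\Z) : \Gamma \cap \SL_2(\Z)]$ can be computed from the covolumes: since $\Gamma$ and $\SL_2(\Z)$ are commensurable, $\Gamma \cap \SL_2(\Z)$ has finite index in $\SL_2(\Z)$, and this index is $\mathrm{covol}(\Gamma \cap \SL_2(\Z))/\mathrm{covol}(\SL_2(\Z))$; the covolume of $\Gamma$ (hence of its finite-index subgroups, up to the relevant indices) is determined by Takeuchi's data, and the index $[\Gamma : \Gamma \cap \SL_2(\Z)]$ is controlled by the quotient $\Gamma/\Gamma^{(2)}$ together with Table \ref{tab:gammap}. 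Putting these together, one finds $[\SL_2(\Z):\Gamma'] = [\SL_2(\Z):\Gamma \cap \SL_2(\Z)]$; combined with $\Gamma' \subseteq \Gamma \cap \SL_2(\Z)$, this forces equality. The main obstacle is bookkeeping: one must be careful that the covolume normalizations match and that the conjugations used to place $\O'$ inside $M_2(\Z)$ do not secretly enlarge $\Gamma \cap \SL_2(\Z)$ — but since conjugation is an isomorphism of pairs, the index is conjugation-invariant, so this is a matter of recording the computation rather than a conceptual difficulty.

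In case IV, the situation is more favorable because, as noted in Section \ref{sec:uniford}, $\Gamma$ is itself a subgroup of $\SL_2(\Z)$ of index $6$, so trivially $\Gamma \cap \SL_2(\Z) = \Gamma$. It remains to check that $\Gamma' = \Gamma$ in this case, which is exactly the last line of Table \ref{tab:gammap}: the Proposition preceding that table established that the largest group between $\Gamma^{(2)}$ and $\Gamma$ generating an order is $\Gamma$ itself. Hence $\Gamma' = \Gamma = \Gamma \cap \SL_2(\Z)$, and the corollary follows in all four cases. The only point worth flagging is that in case IV one should not conflate $\Gamma'$ with $\O'^1$, since (as the text warns) $\O'^1$ is strictly larger than $\Gamma$ here; the equality we need is $\Gamma' = \Gamma$, not $\O'^1 = \Gamma$, and that is precisely what Table \ref{tab:gammap} supplies.
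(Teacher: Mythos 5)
Your forward inclusion and your treatment of case IV are fine, but the reverse inclusion in cases I--III --- which is the entire content of the corollary --- has a genuine gap, and it is not merely bookkeeping. The index $[\SL_2(\Z) : \Gamma\cap\SL_2(\Z)]$ cannot be ``computed from covolumes'' independently of the statement being proved: the covolume of $\Gamma\cap\SL_2(\Z)$ is $[\Gamma:\Gamma\cap\SL_2(\Z)]$ times the covolume of $\Gamma$, so your computation presupposes exactly the index you are trying to determine. You then delegate that index to ``the quotient $\Gamma/\Gamma^{(2)}$ together with Table \ref{tab:gammap}'', but Table \ref{tab:gammap} records which subgroups between $\Gamma^{(2)}$ and $\Gamma$ generate an \emph{order} in $B$, not which cosets of $\Gamma^{(2)}$ meet $\SL_2(\Z)$. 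The implication you need --- that a subgroup of $\Gamma$ lying inside $\SL_2(\Z)$ automatically generates an order --- is precisely the missing step; and once one has it, the whole index count becomes superfluous.

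Indeed, that one observation is the paper's entire proof: $\Gamma\cap\SL_2(\Z)$ is a group between $\Gamma^{(2)}$ and $\Gamma$ whose $\Z$-span is a subring of $M_2(\Z)$ containing $1$ and spanning $B$, hence an order; by the defining maximality of $\Gamma'$ among such groups, $\Gamma\cap\SL_2(\Z)\subseteq\Gamma'$, and equality follows from your forward inclusion. This works uniformly in all four cases and needs neither Table \ref{tab:monod}, nor Proposition \ref{prop:gpvszgp1}, nor any covolume. If you want to avoid the maximality argument altogether, a self-contained alternative in cases I--III is available: a subgroup of $\Gamma$ containing $\Gamma^{(2)}$ and strictly larger than $\Gamma'$ would contain a full extra coset of $\Gamma^{(2)}$, hence (up to sign) one of $\alpha$, $\beta$, $\alpha\beta$ itself, and the irrational traces in Table \ref{tab:trtr} exclude every coset not already in $\Gamma'$. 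But that is a different argument from the one you wrote down, and as it stands your covolume route does not close.
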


\begin{proof}
  The inclusion $\Gamma' \subset \Gamma \cap \SL_2 (\Z)$ clearly holds, and if
  the intersection were larger, then the corresponding group would generate an
  order in $M_2 (\Z)$ larger than that generated by $\Gamma'$, a
  contradiction with the definition of the latter group.
\end{proof}

We have to determine the \Belyi\ maps corresponding to the monodromy triples in
Table \ref{tab:monod}. These maps are of sufficiently low degree to be
accessible directly by general methods such as those developed in \cite{kmsv},
and the functionality under development in \cite{kmssv} quickly returns
explicit formulas for them. Still, we indicate a useful technique in the
calculation of covers that facilitates these computations.

\subsection{Decomposing covers}

Let $\fbar : \Zbar \to \Xbar$ be a ramified cover of degree $d$ of a known base
space $\Xbar$ that is the projective completion of an unramified cover $f : Z
\to X$. We wish to determine whether $f$ can be written as a non-trivial
composition $Z \to Y \to X$ (which will imply a similar statement for $\fbar$),
and if so to describe the maps $Z \to Y$ and $Y \to X$ combinatorially. When
studying \Belyi\ maps, we are of course considering $\Xbar = \P^1$, $X = \P^1 -
\left\{ 0, 1, \infty \right\}$.

The cover $f$ itself corresponds to a homomorphism $\phi : \pi_1 (X, x) \to
S_d$, where $x$ is a chosen base point of $X$. Let $G$ be the image of $\phi$,
and let $H$ be the stabilizer of $1 \in \left\{ 1, \dots, d \right\}$. Then
$G$ is canonically isomorphic to a quotient of $\pi_1 (X, x)$, and in this way
the set of cosets $G / H$ becomes a $\pi_1 (X, x)$-set. After having chosen a
presentation of $\pi_1 (X, x)$, its generators are mapped by $\phi$ to
elements of $S_d$, which group we can identify with $\Sym (G / H)$. In the
case where such a generator represents a simple loop around a branch point in
the completion $\Xbar$ of $X$, its image under $\phi$ describes the
local monodromy.

Via the dictionary of covering theory \cite{kmsv, sijsling-belyi}, a
factorization of $f$ as $Z \to Y \to X$ corresponds to a subgroup $K$
inbetween $H$ and $K$. In our cases $X = \P^1 - \left\{ 0, 1, \infty
\right\}$, and we have determined the representation $\pi_1 (X, x) \to S_d$ up
to conjugacy in Table \ref{tab:monod}. Since the groups involved are very
small, we can calculate the lattice of subgroups of $G$ that contain $H$. In
practice we do not merely find a factorization in one step, but a succession
of inclusions $H \subset K \subset K' \subset \dots \subset G$. We wish to
describe the resulting covers, for which we may without loss of generality
consider the case $H \subset K \subset G$ of a single intermediary subgroup.

For a start, the inclusion $K \subset G$ corresponds to the $\pi_1 (X, x)$-set
$G / K$. This means that we can reuse the generators of $\pi_1 (X, x)$ on this
smaller set of cosets to describe the monodromy, which furnishes a
combinatorial description of the cover $Y \to X$.

The cover corresponding to the inclusion $H \subset K$ can be more difficult to
describe, since in this case we have to describe a cover of $Y$; in particular,
the fundamental group used changes from $\pi_1 (X, x)$ to $\pi_1 (Y, y)$, where
$y$ is some element of $Y$ that is in the fiber of $Y$ over $x$. Using the
methods of \cite{kmsv}, it is possible to calculate a presentation of $\pi_1
(Y, y)$ in terms of the elements of $\pi_1 (X, x)$, and with it the
representation of this group on the set of cosets $K / H$. In the coming lemma,
we determine some weaker invariants, namely the local monodromy around the
points of $Y$. In all of the cases under consideration in this article, this
suffices for the calculation of the covers involved. In stating it, we done the
projective completion of a curve $X$ by $\Xbar$.

\begin{lemma}\label{lem:decomp}
  Let $f : Z \to X$ be a cover described by the monodromy morphism $\phi :
  \pi_1 (X, x) \to G \subset S_d$, and let $K$ be a group inbetween $H = \Stab
  (1)$ and $G$. Let $Z \to Y \to X$ be the resulting factorization of $f$. Let
  $\gamma$ be an element of $\pi_1 (X, x)$ that represents a simple loop around
  an element $\xbar$ of $\Xbar - X$, and let $\sigma = \phi (\gamma)$ be the
  corresponding local monodromy. Then the monodromy of the cover $Z \to Y$
  around the points $\ybar$ of $\Ybar$ over $\xbar$ can be described as
  follows.
  \begin{enumerate}
    \item The points $\ybar$ correspond to the equivalence classes in $G / K$
      under left multiplication by $\sigma$.
    \item Let $c \in G / K$ be a coset representing a point as in (i). Define
      $\tau \in K$ by
      \begin{equation}
        \tau = (c^{-1} \sigma c)^e = c^{-1} \sigma^e c
      \end{equation}
      where $e$ is the length of the cycle obtain by multiplying with $\sigma$.
      Then the monodromy around the point $\ybar$ is described by $\tau$.
  \end{enumerate}
\end{lemma}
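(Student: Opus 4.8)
The plan is to unwind the standard dictionary between covers and permutation representations, keeping careful track of how local monodromy behaves under passing to an intermediate cover. First I would recall that the points $\ybar$ of $\Ybar$ lying over $\xbar$ are in bijection with the orbits of $\langle\gamma\rangle$ acting on the fiber $f_Y^{-1}(x) \leftrightarrow G/K$, where the action is via $\phi$ followed by left translation: this is the classical description of the fiber of a cover over a point and of how a puncture ``splits'' in an intermediate cover, so part (i) is essentially immediate once one fixes the identification of $G/K$ with the generic fiber of $Y\to X$ as a $\pi_1(X,x)$-set. The length $e$ of the cycle of $\sigma$ through the chosen coset $c$ is precisely the ramification index of $Y\to X$ at $\ybar$, so $\gamma^e$ maps into the stabilizer in $G$ of the point $\ybar$ of $Y$; that stabilizer is the conjugate $cKc^{-1}$ (for the left-action convention), which is exactly the image of $\pi_1(Y,y)$ under the induced map when $y$ is the point of $Z$ corresponding to the coset $c$, up to the usual conjugacy ambiguity in choosing the path from $x$ to the basepoint. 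Hence $\tau := c^{-1}\sigma^e c \in K$ makes sense as stated.

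The content of part (ii) is then to identify $\tau$ with the local monodromy of $Z\to Y$ at $\ybar$. For this I would argue as follows: a small loop $\delta$ around $\ybar$ in $Y$, when pushed down to $X$, becomes homotopic to $\gamma^e$ (traversed along the chosen path, i.e. conjugated by the path-class corresponding to $c$) -- this is the local statement that a degree-$e$ cover of a punctured disc is again a punctured disc and its generator maps to the $e$-th power of the generator downstairs. Pulling this back to $Z$ and restricting the $\pi_1(X,x)$-action on $f^{-1}(x)\leftrightarrow G/H$ to the sub-fiber of $Z$ over $\ybar$, which corresponds to the cosets in $K/H$ sitting inside the $\langle\sigma\rangle$-orbit of $c$, one sees that the permutation of $K/H$ induced by $\delta$ is the one induced by the action of $\gamma^e$ read in the coordinate system centred at $c$, that is, the action of $c^{-1}\sigma^e c$. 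Therefore the cycle type of $\tau$ acting on $K/H$ records exactly the ramification of $Z\to Y$ above $\ybar$, which is what ``the monodromy around $\ybar$ is described by $\tau$'' means.

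I would carry this out in the order: (1) fix the identifications $f^{-1}(x)\leftrightarrow G/H$, $f_Y^{-1}(x)\leftrightarrow G/K$, and the compatible $\pi_1$-actions; (2) prove (i) by the orbit description of the fiber; (3) compute the ramification index at $\ybar$ as the cycle length $e$ and note $\sigma^e$ fixes the coset $c$; (4) produce the loop $\delta$ in $Y$ around $\ybar$ and show $\delta$ projects to the conjugate of $\gamma^e$ by $c$; (5) conclude that $\tau = c^{-1}\sigma^e c$ acts on $K/H$ as the monodromy of $\delta$ in the cover $Z\to Y$. The main obstacle is step (4): making the conjugation by $c$ precise requires choosing a path in $X$ from $x$ to a point near $\xbar$, lifting it, and checking that the resulting identification of $\pi_1(Y,y)$ inside (a quotient of) $\pi_1(X,x)$ sends the local generator at $\ybar$ to exactly $\gamma^e$ conjugated the stated way rather than merely up to an uncontrolled conjugacy -- but since only the cycle type of $\tau$ on $K/H$ is needed for the applications, any ambiguity by an element of $K$ is harmless, and this is the point I would emphasize to keep the argument short. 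Everything else is bookkeeping with the covering-space dictionary as in \cite{kmsv}.
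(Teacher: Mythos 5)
Your proposal is correct and follows essentially the same route as the paper's own proof: lift the loop around $\xbar$ to path segments on $Y$, observe that the smallest power $e$ (the cycle length of $\sigma$ through the coset $c$) closes the lift into a simple loop around $\ybar$, and read off the induced permutation of $K/H$ as $c^{-1}\sigma^e c$. Your additional remark that the conjugation by $c$ is only pinned down up to an element of $K$, which is harmless because only the cycle type on $K/H$ is used, is a useful clarification that the paper leaves implicit.
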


\begin{proof}
  This follows by considering the loop around $\xbar$ in $\pi_1 (X, x)$. It
  lifts to various segments of loops on $Y$, which need not again be loops.
  However, the smallest powers described $\tau$ described in the Lemma
  correspond to simple loops on $Y$ with various centers in the fiber of
  $\Ybar$ over $\xbar$. Since all loops on $Y$ around a point in such a fiber
  come from exactly one such orbit under powering, our statement is proved.
\end{proof}

We can now determine the covers that we need by the decomposition procedure
described above.

\begin{lemma}\label{lem:case2} In case II, the \Belyi\ map $X (\Gamma) \to X
  (1)$ is described by the map
  \begin{equation}
    (x, y) \mapsto
    6912 \frac{(2 x^3 - 6 x^2 - 1)^3}{(x - 2)^6 (x + 1)^3 x^2 (x - 3)}
  \end{equation}
  from the curve $y^2 = x (x - 1) (3 x - 2) (3 x + 1)$.
\end{lemma}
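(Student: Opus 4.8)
The plan is to realize the degree-$24$ \Belyi\ map of case II as a composition of lower-degree covers, exploiting the block structure visible in the monodromy triple $(\sigma_0, \sigma_1, \sigma_\infty)$ of Table \ref{tab:monod}. Concretely, one computes the lattice of subgroups $G \supset K \supset H = \Stab(1)$, where $G = \langle \sigma_0, \sigma_1, \sigma_\infty \rangle \subset S_{24}$; by Lemma \ref{lem:B}(i) the group $\PGamma/\PGamma^{(2)}$ is a Klein four-group, so one expects to find an intermediate $K$ of index $6$ in $G$ giving a factorization $X(\Gamma) \to Y \to X(1)$, where $X(\Gamma) \to Y$ is an étale-or-mildly-ramified degree-$4$ cover and $Y \to X(1)$ has degree $6$. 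The degree-$6$ cover $Y \to X(1)$ is then a small \Belyi\ map that can be written down directly by the methods of \cite{kmsv, kmssv}, and its ramification above $0, 1728, \infty$ is read off from the induced action of $\sigma_0, \sigma_1, \sigma_\infty$ on $G/K$. This should identify $Y$ with a rational or genus-$0$ curve carrying a coordinate $x$, so that $Y \to X(1)$ is given by a degree-$6$ rational function in $x$ — a candidate for the shape $6912 (2x^3 - 6x^2 - 1)^3 / \bigl((x-2)^6 (x+1)^3 x^2 (x-3)\bigr)$, whose total degree is $6$ in the numerator/denominator sense appropriate to a degree-$6$ map. (Note $(2x^3-6x^2-1)^3$ has degree $9$, so in fact this is a degree-$12$ map; the intermediate cover is then of degree $12$ and the residual cover $X(\Gamma) \to Y$ has degree $2$. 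One adjusts the filtration $H \subset K \subset G$ accordingly, likely $[G:K] = 12$.)

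Next I would pin down the genus-$1$ curve $X(\Gamma)$ and the quadratic map $X(\Gamma) \to Y$. By Lemma \ref{lem:decomp} applied to each branch point $\xbar \in \{0, 1728, \infty\}$, the local monodromy of $X(\Gamma) \to Y$ over each point $\ybar$ lying above $\xbar$ is the cycle-type $\tau = c^{-1}\sigma^e c$; this tells me exactly which points of $Y$ (equivalently, which values of $x$) are branch points of the double cover and which are unramified. A double cover of the $x$-line $Y \cong \P^1_x$ branched over a specified set of points of even cardinality is, up to the obvious normalizations, $w^2 = \prod (x - x_i)$ over those branch points $x_i$. From the ramification data one predicts the branch locus to be $\{0, 1, 2/3, -1/3\}$, yielding $y^2 = x(x-1)(3x-2)(3x+1)$ after clearing denominators to get an integral model. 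Composing this genus-$1$ curve's $x$-coordinate with the degree-$12$ rational map above gives the claimed formula for $X(\Gamma) \to X(1)$.

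The main obstacle is verifying that the degree-$12$ rational function one writes down actually has the prescribed ramification — i.e., that the \Belyi\ map $Y \to X(1)$ has monodromy conjugate to the induced permutation triple on $G/K$, not merely the right ramification \emph{type}. This is the usual rigidity/combinatorial matching problem for dessins: one must check that $\phi(x) = 6912\,(2x^3-6x^2-1)^3/((x-2)^6(x+1)^3 x^2(x-3))$ and $\phi(x) - 1728$ factor with exactly the multiplicities dictated by $\sigma_0$ and $\sigma_1$, and that $\phi$ has the right pole orders. In practice this reduces to checking two polynomial identities: that $2x^3 - 6x^2 - 1$ is (up to scalar) the cube-root-extractable cubic factor of the numerator, and that $\phi(x) - 1728$ is $\bigl(\text{square}\bigr) \cdot (x-2)^{-6}(x+1)^{-3}x^{-2}(x-3)^{-1}$ times a constant — a finite computation that \Magma\ carries out, as recorded in \cite{sijsling-1inf-code}. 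Once the degree-$12$ map is certified, the quadratic-twist ambiguity in $y^2 = x(x-1)(3x-2)(3x+1)$ is resolved by the requirement (to be taken up in Section \ref{sec:canmod}) that the model be the canonical one; for the present lemma, only the function-field statement is needed, and that is immediate once the branch locus of $X(\Gamma) \to Y$ is confirmed.
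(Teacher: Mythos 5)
Your overall strategy --- factor the degree-$24$ cover through the subgroup lattice between $H$ and $G$, realize $X(\Gamma)$ as a double cover of a genus-$0$ intermediate curve, and use Lemma \ref{lem:decomp} to control the ramification --- is the same skeleton as the paper's proof, and your self-correction to a degree-$12$ intermediate cover with a degree-$2$ residual is exactly the first step taken there. The substantive difference is how the degree-$12$ genus-$0$ piece is produced. You treat it as a single \Belyi\ map to be written down directly by the general methods of \cite{kmsv, kmssv} and then certified by factorization identities; the paper instead iterates the decomposition once more, finding a further intermediate subgroup $K$ with $[G:K]=4$ and $[K:H]=3$, so that the degree-$12$ map is assembled as a degree-$3$ cover on top of a degree-$4$ cover. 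Each of those pieces is a genus-$0$ \Belyi\ map of degree at most $4$ with prescribed ramification (the degree-$3$ piece is pinned down by applying Lemma \ref{lem:decomp} to the fibers of the degree-$4$ map, giving type $(3^1, 2^1 1^1, 2^1 1^1)$), hence essentially rigid and writable by hand after a M\"obius normalization. That is what makes the computation tractable without Gr\"obner bases or numerics; your route is legitimate but buries the real work in a black box, and your closing remark correctly identifies the residual issue that matching ramification \emph{type} does not by itself certify the monodromy class --- a point you raise but do not resolve, whereas the paper's construction inherits the correct monodromy from the coset actions at each stage.

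Two smaller points. First, the branch locus of the final double cover need not be extracted from Lemma \ref{lem:decomp}: the intermediate degree-$12$ curve has signature $(0;\infty^4)$ while $\Gamma^{(2)}$ has signature $(1;\infty^4)$, so Riemann--Hurwitz forces the double cover to be ramified exactly at the four cusps, i.e.\ at the four poles of the degree-$12$ rational function. In the coordinate of the displayed map those poles are $x=2,-1,0,3$, so the model one actually derives is $y^2 = x(x+1)(x-2)(x-3)$ up to quadratic twist (this is the form used in Section \ref{sec:canmod}); the quartic $x(x-1)(3x-2)(3x+1)$ lives in the coordinate before the final rescaling $x \mapsto x/3$, and your predicted branch locus $\{0,1,2/3,-1/3\}$ appears to be read off from the displayed equation rather than derived from the cover. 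Second, the opening guess of an index-$6$ intermediate subgroup motivated by the Klein four-group $\PGamma/\PGamma^{(2)}$ is a red herring in case II, where $\Gamma'=\Gamma^{(2)}$ and the useful intermediate subgroups come from the lattice of the degree-$24$ permutation representation itself; you discard that guess yourself, so no harm is done.
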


\begin{proof}
  In this case the monodromy group has cardinality $288$ and trivial center. We
  use Lemma \ref{lem:decomp}. There multiple ways in which we can use the
  subgroup lattice between $H$ and $G$. We have chosen a particularly simple
  one; between $H$ and $G$ there is a group that contains $H$ of index $2$ and
  that gives rise to a cover of $\P^1 - \left\{ 0, 1, \infty \right\}$ with
  permutation triple
  \begin{equation}\label{eq:case2H}
    \begin{aligned}
      \sigma_1 & = (1, 2)(3, 4)(5, 10)(6, 9)(7, 8)(11, 12) \\
      \sigma_0 & = (1, 3, 6)(2, 5, 4)(7, 9, 10)(8, 11, 12) \\
      \sigma_{\infty} & = (1, 4)(2, 6, 7, 12, 8, 10)(3, 5, 9)
    \end{aligned}
  \end{equation}
  Riemann-Hurwitz shows that the resulting cover has genus $0$. Moreover, since
  all ramification above the elliptic points of index $2$ and $3$ has been
  absorbed, we see that the corresponding curve has signature $(0; \infty^4)$,
  where the cusps correspond to the $4$ elements above $\infty$. We can then
  recover $X (\Gamma^{(2)})$ by taking a degree $2$ cover that
  ramifies above these $4$ points. So we instead consider the cover $Z \to X$
  associated to \eqref{eq:case2H} and the associated groups $G$ and $H$.

  Using the subgroup lattice shows that once more there is an intermediate
  subgroup $K$ between $G$ and $H$, generated by the elements
  \begin{equation}
    \begin{split}
      (1, 4)(2, 6, 7, 12, 8, 10)(3, 5, 9), \\
      (1, 11, 4)(2, 3, 12)(5, 6, 8)(7, 9, 10), \\
      (2, 8, 7)(3, 5, 9)(6, 10, 12) .
    \end{split}
  \end{equation}
  It contains $H$ of index $3$. We get a factorization
  $Z \to Y \to X$. Considering cosets of $K$ shows that $Y \to X$ is described
  by the triple
  \begin{equation}
    \begin{aligned}
      \sigma_1 & = (1, 2)(3, 4) \\
      \sigma_0 & = (1, 2, 3) \\
      \sigma_{\infty} & = (2, 3, 4)
    \end{aligned}
  \end{equation}
  This gives the cover
  \begin{equation}\label{eq:case2K}
    x \mapsto 6912 \frac{x^3 (x + 2)}{4 x - 1}
  \end{equation}
  of the $j$-line.

  The cover $Z \to Y$ ramifies over the points $x = -2$ in the fiber over $j =
  0$ and the points $x = 1/4, \infty$ in the fiber over $j = \infty$. Applying
  Lemma \ref{lem:decomp} shows that the ramification over these points is given
  by $(3^1, 2^1 1^1, 2^1 1^1)$. The \Belyi\ map $b : x \mapsto (-27/4) x^2 (x -
  1)$ ramifies in this way over $0$, $1$ and $\infty$, so by moving these
  branch points we get our cover $Z \to Y$, which we can take to be $(-8 b -
  1)/(4 b - 4)$. We obtain $Z \to X$ by substituting this latter map in the
  former map \eqref{eq:case2K} and then polishing by subsituting $x/3$ for
  $x$.
\end{proof}

Similarly, we obtain:

\begin{lemma}\label{lem:case1} In case I, the \Belyi\ map $X (\Gamma) \to X
  (1)$ is described by the map
  \begin{equation}\label{eq:case1}
    (x, y) \mapsto
    \frac{- (x^2 - 10 x + 5)^3}{x}
  \end{equation}
  from the curve $y^2 = x (x^2 - 22 x + 125)$.
\end{lemma}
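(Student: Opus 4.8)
The plan is to follow the strategy of Lemma~\ref{lem:case2}: rather than computing the \Belyi\ map with monodromy triple listed for case~I in Table~\ref{tab:monod} directly, we decompose it, peeling off the hyperelliptic involution of the genus-$1$ curve so as to reduce to a genus-$0$ \Belyi\ map amenable to the algorithms of \cite{kmsv}. (By Corollary~\ref{cor:inter} this map is the degree-$12$ cover $X(\Gamma') \to X(1)$ with $\Gamma' = \Gamma \cap \SL_2(\Z)$ from Table~\ref{tab:gammap}.) I would form the monodromy group $G \subset S_{12}$ with $H = \Stab(1)$ and inspect the lattice of subgroups between them. One expects a subgroup $K$ with $[K : H] = 2$; the corresponding sub-cover $Y \to X(1)$ then has degree $6$, and reading off its monodromy from the action of the standard generators on $G/K$ should give ramification $(3^2)$, $(2^2 1^2)$, $(5^1 1^1)$ over $j = 0, 1728, \infty$, so that $Y$ has genus $0$ by Riemann--Hurwitz. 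Solving for this degree-$6$ \Belyi\ map --- directly, or after a further decomposition $6 = 2 \cdot 3$ --- and normalising a coordinate $x$ on $Y$ should produce $x \mapsto -(x^2 - 10x + 5)^3/x$. Pinning down the rational function and choosing coordinates so that it has small integer coefficients is the first point where a Gröbner basis computation seems unavoidable.

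Given $Y$ and the map $Y \to X(1)$, I would recover $X(\Gamma')$ as the double cover $X(\Gamma') \to Y$ furnished by Lemma~\ref{lem:decomp} applied to $H \subset K \subset G$. That lemma determines the branch locus of this double cover inside $Y \cong \P^1_x$: the two points $x = 0$ and $x = \infty$ of the fibre over $j = \infty$ --- where the local monodromy of $X(\Gamma')$, a $10$-cycle and a $2$-cycle, is not yet accounted for by the $5^1 1^1$ ramification of $Y \to X(1)$ --- together with the two of the four points in the fibre of $Y \to X(1)$ over $j = 1728$ at which $Y \to X(1)$ is unramified, so that the composite picks up the full $2^6$ ramification matching $\sigma_1$. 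Since a double cover of $\P^1$ has exactly four branch points by Riemann--Hurwitz, these are all of them. Writing $q_1, q_2$ for the latter two, $X(\Gamma')$ is $y^2 = x(x - q_1)(x - q_2)$, the point at infinity of this affine model being one of the four branch points. Extracting from $(x^2 - 10x + 5)^3 + 1728\,x$ the quadratic factor coprime to the square factor $(x^2 - 4x - 1)^2$ identifies $(x - q_1)(x - q_2)$ with $x^2 - 22x + 125$; hence $X(\Gamma')$ is $y^2 = x(x^2 - 22x + 125)$ and the \Belyi\ map $X(\Gamma) \to X(1)$ is the composite $(x, y) \mapsto -(x^2 - 10x + 5)^3/x$, as claimed. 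A bookkeeping check on the cusps (widths $2$ and $10$ over $x = 0$ and $x = \infty$) confirms the signature $(1; \infty^2)$ of $\Gamma'$.

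The obstacle here, as in case~II, is computational rather than conceptual, and has two parts: choosing, among possibly several intermediate subgroups between $H$ and $G$, the one that yields the cleanest genus-$0$ quotient $Y$; and solving the resulting genus-$0$ \Belyi\ map together with the Möbius normalisation that makes both $-(x^2 - 10x + 5)^3/x$ and the branch quadratic $x^2 - 22x + 125$ come out with small coefficients. As a final safeguard one would recompute the monodromy of the reconstructed cover $y^2 = x(x^2 - 22x + 125) \to \P^1_j$ --- for instance by tracing the generators $S$, $T$, $(ST)^{-1}$ of $\PSL_2(\Z)$ as in \cite[\S 3]{kmsv} --- and verify that it agrees up to simultaneous conjugacy with the triple for case~I in Table~\ref{tab:monod}, which rules out a different cover sharing the same local ramification data.
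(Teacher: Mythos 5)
Your proposal is correct and is essentially the argument the paper intends: Lemma~\ref{lem:case1} is stated with only ``Similarly, we obtain'' in place of a proof, the method being that of Lemma~\ref{lem:case2}, and your decomposition (the degree-$6$ quotient $Y$ with ramification $(3^2),(2^2 1^2),(5^1 1^1)$, the branch locus of the double cover read off via Lemma~\ref{lem:decomp}, and the factorization $(x^2-10x+5)^3+1728\,x=(x^2-4x-1)^2(x^2-22x+125)$) checks out against the case~I monodromy in Table~\ref{tab:monod}. The only detail left implicit is that the branch locus determines the genus-$1$ curve only up to quadratic twist, but that ambiguity is irrelevant for describing the cover and is resolved in Section~\ref{sec:canmod} exactly as in the paper.
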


%\begin{proof}
%  In this case we can use Lemma \ref{lem:decomp}, but we dispense with it; the
%  cover in question has an automorphism, represented on the level of
%  permutations by the element $(1, 11)(2, 12)(3, 9)(4, 6)(5, 8)(7, 10)$ of
%  order $2$. After quotienting out, we get a degree $6$ cover with ramification
%  type $(3^2, 2^2 1^2, 5^1 1^1)$. This is a cover of genus $0$ and degree $6$
%  which we can find in \cite{birch}; it is nothing but the rational function in
%  \eqref{eq:case1} considered as a map from $\P^1$ to $\P^1$.
%
%  To obtain our genus $1$ curve, we only have to take a further degree $2$
%  cover that corresponds to the automorphism that we just quotiented out. This
%  cover branches at the two points of ramification index $1$ in the fiber over
%  $j = 1728$ and the two elements in the fiber over $j = 0$. Since the former
%  points satisfy $x^2 - 22 x + 125 = 0$ and the latter are $x = 0, \infty$,
%  we see that we obtain the elliptic curve in the statement of the lemma.
%\end{proof}

\begin{lemma}\label{lem:case3} In case III, the \Belyi\ map $X (\Gamma) \to X
  (1)$ is described by the map
  \begin{equation}
    (x, y) \mapsto 256 \frac{(x^2 + 1)^3}{x^4}
  \end{equation}
  from the curve $y^2 = x (4 x^2 + 1)$.
\end{lemma}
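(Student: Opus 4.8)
The plan is to follow the proof of Lemma~\ref{lem:case2} almost verbatim, feeding the degree $12$ monodromy triple $(\sigma_0, \sigma_1, \sigma_\infty)$ of case III from Table~\ref{tab:monod} into the decomposition procedure of Lemma~\ref{lem:decomp}. First I would compute the monodromy group $G = \langle \sigma_0, \sigma_1, \sigma_\infty \rangle \subset S_{12}$ and the point stabiliser $H = \Stab(1)$, and enumerate the intermediate groups $K$ with $H \subseteq K \subseteq G$. Since $\Gamma' \supset \Gamma^{(2)}$ with index $2$, one expects a chain $H \subset K_0 \subset G$ with $[G : K_0] = 6$; the coset action on $G/K_0$ then describes a degree $6$, genus $0$ cover $Y \to X = \P^1 \setminus \left\{ 0, 1, \infty \right\}$ whose Fuchsian group has signature $(0; 2^2, \infty^2)$, while Lemma~\ref{lem:decomp} shows that the residual double cover $X(\Gamma') \to Y$ is branched exactly over the two cusps and the two order-$2$ elliptic points of $Y$ (so that $X(\Gamma')$ acquires genus $1$, no elliptic points, and two cusps, matching the signature of $\Gamma'$). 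This is the exact analogue of the intermediate model of signature $(0; \infty^4)$ occurring in Lemma~\ref{lem:case2}.

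Next I would make $Y \to X$ explicit, again as in case II by splitting it along a further intermediate subgroup between $K_0$ and $G$ into covers of degrees $2$ and $3$, each a \Belyi\ map of $\P^1$ of minuscule degree that can be written down after normalising its branch points to $\left\{ 0, 1, \infty \right\}$; composing and absorbing coordinate changes produces the rational function $Y \to X$ in an affine coordinate $x$ on $Y \cong \P^1$. Applying Lemma~\ref{lem:decomp} to this $x$-line pins down the four branch points of $X(\Gamma') \to Y$; a Möbius transformation carries them to $\left\{ 0, i/2, -i/2, \infty \right\}$, so that $X(\Gamma')$ is the double cover $y^2 = x(4x^2 + 1)$, and substituting this coordinate into the composite $Y \to X(1)$ --- after a final rescaling of $x$ to tidy the constants --- gives the claimed map $(x, y) \mapsto 256 (x^2 + 1)^3 / x^4$.

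One then checks directly that this candidate has the ramification prescribed by Table~\ref{tab:monod}: the numerator is a perfect cube, yielding type $(3^4)$ over $j = 0$; the identity
\begin{equation*}
  256 (x^2 + 1)^3 - 1728 x^4 = 64 (4 x^2 + 1)(x^2 - 2)^2
\end{equation*}
together with $4 x^2 + 1 = y^2/x$ on the curve (and the fact that $1/x$ has a double pole at $(0,0)$, a double zero at infinity, and is a unit elsewhere) exhibits $256 (x^2+1)^3/x^4 - 1728$ as vanishing to order $2$ at each of six points, i.e.\ type $(2^6)$ over $j = 1728$; and the only poles of the map are at the Weierstrass point $(0,0)$, of order $8$, and at infinity, of order $4$, matching the cycle type $(8^1, 4^1)$ of $\sigma_\infty$ and confirming the two cusps.

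The main obstacle is computational rather than conceptual: one must single out the correct tower of intermediate subgroups inside a monodromy group of order a few hundred, solve the small polynomial systems attached to each sub-cover, and then juggle the several competing $\P^1$-coordinates and the genus-raising double cover so as to land on the stated Weierstrass model over $\Q$ and not on an unrecognisable twist. These are precisely the manipulations carried out with \Magma\ in \cite{sijsling-1inf-code}; as with Lemma~\ref{lem:case2}, no shorter theoretical route seems to be available.
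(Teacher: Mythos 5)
Your proposal is correct and follows essentially the same route as the paper, which omits the proof of this lemma precisely because it is obtained by the same subgroup-lattice decomposition via Lemma~\ref{lem:decomp} that is spelled out for case II in Lemma~\ref{lem:case2} (here through the degree~$6$ intermediate $x$-line of signature $(0; 2^2, \infty^2)$ and the double cover branched over its two cusps and two elliptic points). Your closing verification, using the identity $256(x^2+1)^3 - 1728x^4 = 64(4x^2+1)(x^2-2)^2$ to confirm the ramification types $(3^4)$, $(2^6)$, $(8^1,4^1)$, is accurate and a worthwhile independent check of the computer calculation.
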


%\begin{proof}
%  In this case the monodromy group has cardinality $96$ and trivial center. We
%  again use Lemma \ref{lem:decomp}. One more we obtain a degree $2$ cover of a
%  \Belyi\ map of genus $0$, which this time has ramification triple
%  \begin{equation}
%    \begin{aligned}
%      \sigma_1 & = (1, 2)(5, 6) \\
%      \sigma_0 & = (1, 3, 6)(2, 5, 4) \\
%      \sigma_{\infty} & = (1, 4, 5, 3)(2, 6)
%    \end{aligned}
%  \end{equation}
%  We can find this cover directly in \cite{birch}. Alternatively, the lemma
%  shows that it factors as $Z \to Y \to X$, where $Y \to X$ has ramification
%  triple $(2^1 1^1, 3^1, 2^1 1^1)$ and where $Z \to Y$ has ramification $(2^1,
%  2^1)$ above the points in the fiber above $j = \infty$ (there is a point
%  above $j = 1728$ that could a priori also gives rise to ramification but does
%  not do so in the end).
%
%  The cover $Y \to X$ is given by
%  \begin{equation}
%    x \mapsto \frac{-256}{x^2 (x - 1)}
%  \end{equation}
%  We see that we have to precompose with a cover of the form $x \mapsto x^2 /
%  (x^2 + c)$. To ensure correct ramification we take $c = 1$, in which case we
%  end up with the rational function in the Lemma.
%\end{proof}

\section{Canonical models}\label{sec:canmod}

We now determine canonical models for our $(1; \infty)$-curves, using two
different methods.

\subsection{First method: $q$-expansions}

The first method is elementary and arguably still the most insightful; we
choose the defining equations that we encounter in such a way that the
resulting $q$-expansions of the coordinates are rational.

To illustrate this method, we consider case II, where we first try the model
furnished in Lemma \ref{lem:case2}, namely $y^2 = x (x + 1) (x - 2) (x - 3)$.
In this case the fiber of the \Belyi\ map above $\infty$ has $4$ elements with
distinct ramification indices. All of these will therefore give rise to a
branch that gives a rational $q$-expansion of $x$. However, we also need to
consider the square root that we have to draw when determining $y$, and this
only gives a rational $q$-expansion for that coordinate if we switch to the
quadratic twist by $-1$. This fully determines the canonical model over $\Q$ of
the curve $X (\Gamma^{(2)})$. It is independent of the particular point over
$\infty$ that is chosen.

Since $\Gamma^{(2)} \neq \Gamma$, it still remains to determine canonical model
of $X (\Gamma)$ in this case. This is obtained by dividing out the $2$-torsion
of the Jacobian, which in fact leads to an isomorphic curve. Case IV
corresponds to a subgroup of $\SL_2 (\Z)$, so we do not have to take any
further isogeny, while in case I we can recover a model of $X (\Gamma)$ from
the given one for $X (\Gamma')$ be taking an isogeny with kernel $(0, 0)$.

The case III leads to a subtlety, due to an automorphism of the cover obtained
from the group $\Gamma'$ (which is the map $x \mapsto -x$ in Lemma
\ref{lem:case3}). In this case there is no common quadratic twist that makes
both branches over $\infty$ have rational $q$-expansions. Either choice of
branch leads to a model of $X (\Gamma')$ over $\Q$. Neither of them can be
called canonical over $\Q$, however, but only over $\Q (i)$ where either chosen
model over $\Q$ leads to all branches being rational. On the other hand, the
model of the codomain $X (\Gamma)$ of the resulting $2$-isogeny (which again
has kernel $(0, 0)$) does not depend on the choice of branch, and we get
rational $q$-expansions at the single cusp of this curve. Therefore also in
this case we obtain a canonical model for $X (\Gamma)$ over $\Q$, even though
$X (\Gamma')$ admits no such model.

Table \ref{tab:canmod} summarizes the canonical models found; the parabolic
point corresponds to the point at infinity in all cases. Besides a reference to
the LMFDB \cite{lmfdb}, where more detailed information on these curves can be
found, we have also included their $j$-invariant and Faltings height
$h_{\textrm{Falt}}$. Table \ref{tab:exps} gives some of the resulting
$q$-expansions of the coordinates $x$ and $y$.

\begin{table}[h]
  \begin{center}
    \begin{tabular}{c|cccc}
      Case & Curve & LMFDB label & $j$-invariant & $h_{\textrm{Falt}}$ \\
      \hline
      I    & $y^2 = x^3 - 44 x^2 - 16 x$ & \href{http://www.lmfdb.org/EllipticCurve/Q/20/a/1}{20.a1} & $2^{14} 31^3/ 5^3$ & $3.814$ \\
      II   & $y^2 = x^3 - 4 x^2 - 384 x - 2304$ & \href{http://www.lmfdb.org/EllipticCurve/Q/24/a/3}{24.a3} & $2^2 73^3 / 3^4$ & $3.152$ \\
      III  & $y^2 = x (x^2 - 256)$ & \href{http://www.lmfdb.org/EllipticCurve/Q/32/a/4}{32.a4} & $1728$ & $-1.311$ \\
      IV   & $y^2 = x^3 - 1728$ & \href{http://www.lmfdb.org/EllipticCurve/Q/36/a/3}{36.a3} & $0$ & $-1.321$
    \end{tabular}
  \end{center}
  \caption{Canonical models}
  \label{tab:canmod}
\end{table}

\begin{table}[h]
  \begin{center}
    \begin{tabular}{c|c}
      Case & Expansions of $x$ and $y$ \\
      \hline
      I    & $x = q^{-1/5} + 16 + 134 q^{1/5} + 760 q^{2/5} + 3345 q^{3/5} + 12256 q^{4/5} + \dots$ \\
           & $y = q^{-3/10} + 2 q^{-1/10} - 129 q^{1/10} - 1778 q^{3/10} - 13725 q^{1/2} - \dots$ \\
      II   & $x = q^{-1/6} + 2 + 79 q^{1/6} + 352 q^{1/3} + 1431 q^{1/2} + 4160 q^{2/3} + \dots$ \\
           & $y = q^{-1/4} + q^{-1/12} - 76 q^{1/12} - 778 q^{1/4} - 5224 q^{5/12} + \dots$ \\
      III  & $x = q^{-1/4} + 52 q^{1/4} + 834 q^{3/4} + 4760 q^{5/4} + 24703 q^{7/4} + \dots$ \\
           & $y = q^{-3/8} - 50 q^{1/8} - 2599 q^{5/8} - 29154 q^{9/8} - 238728 q^{13/8} + \dots$ \\
      IV   & $x = q^{-1/3} + 824 q^{2/3} - 613348 q^{5/3} + 831470016 q^{8/3} + \dots$ \\
           & $y = q^{-1/2} + 372 q^{1/2} + 29250 q^{3/2} - 134120 q^{5/2} + \dots$
    \end{tabular}
  \end{center}
  \caption{$q$-expansions}
  \label{tab:exps}
\end{table}

The curves that we found have minimal conductor among all their possible
quadratic twists over $\Q$. Note that while case III gives a curve that is
isomorphic to $X_0 (32)$ over $\Q$, the latter curve is in fact a $(1;
\infty^8)$-curve. The corresponding groups are commensurable, with neither
being a subgroup of the other up to conjugacy.

\subsection{Second approach: Shimura's viewpoint}

A second approach to obtain canonical models is to use work by Shimura
\cite{deligne, shimura} that gives a more general description of canonical
models and their properties. In \cite{sijsling-1e}, this served as an essential
tool to determine canonical models in the presence of a non-split quaternion
algebra.

In this approach we further enlarge the orders $\O'$ from Section
\ref{sec:uniford} to orders $\O'' \supset \O'$ with $\O''^1 = \O'^1$ but with
the property that the norm surjects to as large a subset of $\Z$ as possible.
Shimura then describes the canonical field of definition of $X (\O''^1)$ and
indicates how the traces of Frobenius of the canonical model over this field
can be determined. We briefly sketch the results here.

In cases I and II the overorder $\O''$ can be chosen in such a way that the
norm surjects to $\Z$. In this case the canonical field of definition equals
$\Q$. The corresponding curve has good reduction outside the primes that divide
the index of $\O''$ in $M_2 (\Z)$, and its traces of Frobenius can be
determined using Shimura congruence, as in \cite[Chapter 4]{sijsling-thesis}.
In combination with the known $j$-invariant, these data suffice to determine
the canonical model completely. We checked that the results obtained agree with
the first approach.

In case III we can at most obtain an order $\O''$ whose norm map attains all
primes that are congruent to $1$ modulo $4$. Shimura's results then show that
the canonical field of definition of the curve $X (\O'^1) =  X (\langle
\Gamma^{(2)}, \alpha \beta \rangle)$ is $\Q (i)$, which is in line with the
subtlety encountered when using the first method above. The methods from
\cite{sijsling-thesis} can again be used to compute the traces of Frobenius of
Deligne's non-connected model at all primes, and once more the results agree
with the more elementary method above.

Case IV is quite surprising. In this case $\PGamma$ is the image of the
commutator subgroup of $\SL_2 (\Z)$ in $\PSL_2 (\R)$. Indeed, like the former
group, the latter is abelian of index $6$ in $\PSL_2 (\R)$. By the universal
property of the commutator subgroup, these groups therefore have to coincide.

The commutator subgroup of $\SL_2 (\Z)$ is described in \cite[Remark
3.9]{conrad-sl2}. Let
\begin{equation}
  \gamma_1 = \begin{pmatrix}
    1 & 1 \\
    1 & 2
  \end{pmatrix} ,
  \gamma_2 = \begin{pmatrix}
    1 & -1 \\
    -1 & 2
  \end{pmatrix}
\end{equation}
be generators of the commutator subgroup of $\SL_2 (\Z)$. Recall that the order
$\O' = \Z [\Gamma]$ had the property $\Gamma \subsetneq \O'^1$ and is therefore
too large to study $\Gamma$. We can instead consider the order $\O = \Z
[\Gamma^{(2)}]$. Then
\begin{equation}
  \Gamma = \langle \O^1, \gamma_1, \gamma_2 \rangle .
\end{equation}
It turns out that $\O$ can be enlarged to an order of index $36$ in $M_2 (\Z)$
to which none of $\gamma_1, \gamma_2, \gamma_1 \gamma_2$ belongs. Moreover, the
groups $\langle \Gamma^{(2)}, \alpha \rangle$, $\langle \Gamma^{(2)}, \beta
\rangle$, $\langle \Gamma^{(2)}, \alpha \beta \rangle$ all generate additional
orders of index $36$ to which exactly one of these commutators belongs. These
orders are distinct, but have the same index in $M_2 (\Z)$ and give rise to
curves with canonical field of definition $\Q$ whose Jacobians are all
isogenous.

\subsection{Orders obtained}

Bases of the orders $\O''$ can be found by running the code at
\cite{sijsling-1inf-code}. None of these orders is Eichler, and in all cases
the index in $M_2 (\Z)$ equals the corresponding conductor in Table
\ref{tab:canmod}. In the next section we will construct these orders $\O''$ as
intersections.

\section{Modular interpretations}\label{sec:modular}

The curves $X (\Gamma)$ also parametrize the points of certain moduli functors,
which we describe in this section. A more advanced general exposition is given
in \cite{deligne-rapoport}.

\subsection{Over $\C$}

Consider pairs
\begin{equation}\label{eq:modpair}
  (E, (\lambda_1, \lambda_2))
\end{equation}
where $E$ is an elliptic curve over $\C$ and where $(\lambda_1, \lambda_2)$ is
an ordered basis of the homology group $H_1 (E, \Z)$, which we assume to be
\defi{positively oriented} in the sense that $\im (\lambda_1 / \lambda_2) > 0$
in the one-dimensional $\C$-vector space $V = H_1 (E, \Z) \otimes \R = H^0 (E,
\omega_E)^{\vee}$ that is the universal cover of $E$.

The group $\SL_2 (\Z)$ acts on the left on the set of objects
\eqref{eq:modpair} as follows: if $\gamma' = \smat{a}{b}{c}{d} \in \SL_2 (\Z)$,
then
\begin{equation}\label{eq:action}
  \gamma' (E, (\lambda_1, \lambda_2))
  = (E, (a \lambda_1 + b \lambda_2, c \lambda_1 + d \lambda_2)) .
\end{equation}
Let $\Gamma'$ be a subgroup of $\SL_2 (\Z)$ (not necessarily congruence). We say
that two pairs $(E, (\lambda_1, \lambda_2))$, $(E', (\lambda'_1, \lambda'_2))$
are $\Gamma'$-equivalent if and only if there exists an isomorphism $\phi : E
\to E'$ such that
\begin{equation}\label{eq:equiv}
  (E', (\phi_* (\lambda_1), \phi_* (\lambda_2)))
  = \gamma' (E', (\lambda'_1, \lambda'_2))
\end{equation}
for some $\gamma'$ in $\Gamma'$.

\begin{proposition}
  The points of the quotient space $X (\Gamma')$ bijectively correspond to the
  $\Gamma'$-equivalence classes of pairs \eqref{eq:modpair}.
\end{proposition}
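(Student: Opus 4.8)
The plan is to exhibit the standard analytic isomorphism $\SL_2(\Z) \backslash \mathcal{H} \xrightarrow{\sim} \{ \text{pairs as in \eqref{eq:modpair}} \}/(\text{iso})$ and then to pass to the intermediate quotient by $\Gamma'$. First I would recall that a point $\tau \in \mathcal{H}$ gives the elliptic curve $E_\tau = \C/(\Z\tau + \Z)$ together with the distinguished ordered basis $(\lambda_1,\lambda_2) = (\tau, 1)$ of $H_1(E_\tau,\Z)$; since $\im(\tau/1) = \im(\tau) > 0$, this basis is positively oriented in the sense defined above. Conversely, given a pair $(E,(\lambda_1,\lambda_2))$ with $\im(\lambda_1/\lambda_2) > 0$, one writes $E = V/H_1(E,\Z)$ with $V = H_1(E,\Z)\otimes\R$ one-dimensional over $\C$; choosing the $\C$-linear isomorphism $V \to \C$ sending $\lambda_2 \mapsto 1$ sends $\lambda_1 \mapsto \tau$ with $\im(\tau) > 0$, and this identifies $(E,(\lambda_1,\lambda_2))$ with $(E_\tau,(\tau,1))$. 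This identification is the content of the classical uniformization; I would just cite it.

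Next I would check that the $\SL_2(\Z)$-action \eqref{eq:action} on pairs matches the usual action on $\mathcal{H}$ under this dictionary, up to the bookkeeping of left versus right. If $\gamma' = \smat{a}{b}{c}{d}$ then $\gamma'(E_\tau,(\tau,1)) = (E_\tau,(a\tau+b,\, c\tau+d))$, and rescaling the second basis vector to $1$ via multiplication by $(c\tau+d)^{-1}$ — which is a $\C$-linear automorphism of $V$, hence realizes an isomorphism of pairs — sends this to $(E_{\gamma'\tau}, (\gamma'\tau, 1))$ where $\gamma'\tau = (a\tau+b)/(c\tau+d)$. Thus two points $\tau,\tau'$ give $\Gamma'$-equivalent pairs (in the sense of \eqref{eq:equiv}, with $\phi$ the composite of the rescaling isomorphisms) if and only if $\tau' \in \Gamma'\tau$. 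I would note that the equivalence relation in the statement allows an arbitrary isomorphism $\phi$, which on the level of lattices in a one-dimensional $V$ is exactly multiplication by a nonzero complex scalar, so it is precisely this rescaling freedom that makes the correspondence well-defined.

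Finally I would assemble these into a bijection: the map $\tau \mapsto (E_\tau,(\tau,1))$ descends to a well-defined map $\Gamma'\backslash\mathcal{H} \to \{\text{pairs}\}/\Gamma'\text{-equivalence}$ by the previous paragraph, it is surjective by the uniformization recalled in the first paragraph, and it is injective because $\tau,\tau'$ yielding $\Gamma'$-equivalent pairs forces $\tau' \in \Gamma'\tau$. Since $X(\Gamma') = \Gamma'\backslash\mathcal{H}$ by definition, this is the claimed bijection. The only genuinely delicate point — and the one I would spell out rather than wave at — is the orientation convention and the direction of the group action: one must verify that the \emph{left} action \eqref{eq:action} on bases corresponds to the usual \emph{left} fractional-linear action on $\mathcal{H}$ with the stated orientation, and that the positivity condition $\im(\lambda_1/\lambda_2) > 0$ is preserved under \eqref{eq:action} exactly because $\SL_2(\Z)$ (as opposed to $\GL_2(\Z)$) has determinant $1$; a quick computation $\im\!\big((a\tau+b)/(c\tau+d)\big) = \im(\tau)/|c\tau+d|^2$ settles this. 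Everything else is a routine unwinding of definitions.
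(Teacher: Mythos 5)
Your proof is correct and follows essentially the same route as the paper: normalize every pair to the form $(E_\tau,(\tau,1))$ via uniformization, and check that the action \eqref{eq:action} on bases corresponds to the usual fractional-linear action of $\Gamma'$ on $\H$. You simply spell out the rescaling by $(c\tau+d)^{-1}$ and the orientation check that the paper leaves implicit.
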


\begin{proof}
  Given a pair as in \eqref{eq:modpair}, there exists a unique $\tau \in \H$ such
  that
  \begin{equation}
    (E, (\lambda_1, \lambda_2)) \cong (E_{\tau}, (\tau, 1)) .
  \end{equation}
  Here $E_{\tau} = V / \Lambda_{\tau}$. The action of $\Gamma'$ is then given via
  the usual action of $\PSL_2 (\R)$ on the upper half plane $\H$. Moreover, two
  pairs $(E_{\tau}, (\tau, 1))$, $(E_{\tau'}, (\tau', 1))$ are equivalent under
  if and only if $\tau$ and $\tau'$ are related by this action.
\end{proof}

In the cases I-III, the group $\Gamma$ is not itself a subgroup of $\SL_2
(\Z)$, and we first restrict to $\Gamma' = \Gamma \cap \SL_2 (\Z)$. In all of
these cases, the group $\Gamma$ can be obtained from $\Gamma'$ by adjoining
elements $w \in M_2 (\Z)$ that are not in $\SL_2 (\Z)$ but whose square is a
still scalar. These elements $w$ normalize the order $\O'$ defined by
$\Gamma'$.

The modular interpretation of these involutions in terms of the pairs
\eqref{eq:modpair} is as follows. Let $s$ be the square of $w =
\smat{a}{b}{c}{d}$, considered as a scalar, let $w' = w^{-1}$, and let
$\Lambda^w = w' (\Lambda)$ be the overlattice of $\Lambda$ defined by $w'$.
Then $\Lambda^w$ contains $\Lambda$ of index $s$. We define
\begin{equation}
  E^w = V / \Lambda^w
\end{equation}
and define
\begin{equation}
  (\lambda_1^w, \lambda_2^w) = w' (\lambda_1, \lambda_2) = (a \lambda_1 + b
  \lambda_2, c \lambda_1 + d \lambda_2)
\end{equation}
as in \eqref{eq:action}. Then the association
\begin{equation}
  (E, (\lambda_1, \lambda_2)) \mapsto (E^w, (\lambda_1^w, \lambda_2^w))
\end{equation}
gives rise to a well-defined map on $\Gamma'$-orbits because $w^{-1} \Gamma' w
= \Gamma'$.

Restricting to pairs of the form $(E_{\tau}, (\tau, 1))$ as above, we see the
following.

\begin{proposition}\label{prop:XGamma}
  Suppose that $\Gamma = \langle \Gamma', S \rangle$, where $S$ is a set of
  involutions $w$ whose scalar is a square and that satisfy $w \Gamma' w^{-1} =
  \Gamma'$. Then the points of the quotient space $X (\Gamma)$ bijectively
  correspond to the equivalence classes of pairs \eqref{eq:modpair}, up to the
  additional identifications generated by
  \begin{equation}
    (E, (\lambda_1, \lambda_2)) \sim (E^w, (\lambda_1^w, \lambda_2^w))
  \end{equation}
  for $w \in S$.
\end{proposition}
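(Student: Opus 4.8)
The plan is to deduce this statement directly from the preceding Proposition (the bijection between $X(\Gamma')$ and $\Gamma'$-equivalence classes of pairs \eqref{eq:modpair}) by carefully tracking what happens when we pass from $\Gamma'$ to the larger group $\Gamma = \langle \Gamma', S\rangle$. Since the upper half plane realization identifies $X(\Gamma') = \Gamma' \backslash \H$ with the set of $\Gamma'$-equivalence classes via $\tau \mapsto (E_\tau, (\tau, 1))$, and since $X(\Gamma) = \Gamma \backslash \H$, the quotient map $X(\Gamma') \to X(\Gamma)$ corresponds on the level of $\H$ to further quotienting by the cosets of $\Gamma'$ in $\Gamma$. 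So the content to verify is: (a) each generator $w \in S$ acts on the set of $\Gamma'$-equivalence classes in a way compatible with the asserted rule $(E, (\lambda_1,\lambda_2)) \mapsto (E^w, (\lambda_1^w, \lambda_2^w))$, and (b) the equivalence relation on pairs generated by $\Gamma'$-equivalence together with these $w$-identifications is exactly the equivalence relation whose classes are the $\Gamma$-orbits on $\H$.

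First I would make precise how $w = \smat{a}{b}{c}{d} \in M_2(\Z)$, with $w^2 = s$ scalar, acts on a pair $(E_\tau, (\tau, 1))$. Writing $\Lambda_\tau = \Z\tau + \Z$, the overlattice $\Lambda^w = w'(\Lambda_\tau)$ with $w' = w^{-1} = s^{-1} w$ (up to the scalar, using $w^2 = s$) contains $\Lambda_\tau$ with index $s$ — this is just the elementary-divisor computation already recorded in the paragraph preceding the Proposition. The map $w$ acts on $\H$ by the usual fractional linear transformation $\tau \mapsto w\cdot\tau = (a\tau+b)/(c\tau+d)$, and one checks that $(E^w, (\lambda_1^w,\lambda_2^w))$ is isomorphic, as a pair, to $(E_{w\cdot\tau}, (w\cdot\tau, 1))$: indeed, dividing the lattice $\Lambda^w$ by the scalar $c\tau+d$ normalizes it to $\Z(w\cdot\tau) + \Z$, and the basis $(a\lambda_1 + b\lambda_2, c\lambda_1 + d\lambda_2)$ becomes $(w\cdot\tau, 1)$ after the same rescaling. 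Here one must also verify the positive-orientation condition $\im(w\cdot\tau) > 0$, which holds precisely because $w$ has positive determinant $s$. This establishes that the association in the Proposition is, under the parametrization by $\H$, literally the action of $w$ on $\H$ — hence in particular well-defined on $\Gamma'$-orbits, which is where the hypothesis $w\Gamma' w^{-1} = \Gamma'$ enters (it guarantees $w$ descends to $\Gamma'\backslash\H$).

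Having identified each $w$-identification with the action of the corresponding element of $\Gamma$ on $\H$, the remaining step is bookkeeping about equivalence relations: two points $\tau, \tau' \in \H$ lie in the same $\Gamma$-orbit if and only if they are connected by a finite chain of applications of elements of $\Gamma'$ and of the generators in $S$, since $\Gamma = \langle \Gamma', S\rangle$. Translating through the bijection $\tau \mapsto (E_\tau, (\tau,1))$, this says exactly that the $\Gamma$-orbits are the classes of the equivalence relation generated by $\Gamma'$-equivalence of pairs together with the relations $(E,(\lambda_1,\lambda_2)) \sim (E^w,(\lambda_1^w,\lambda_2^w))$ for $w \in S$. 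Since $X(\Gamma) = \Gamma\backslash\H$, this is precisely the claimed bijection.

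The main obstacle is the careful verification in the second paragraph that $(E^w, (\lambda_1^w, \lambda_2^w)) \cong (E_{w\cdot\tau}, (w\cdot\tau, 1))$ as \emph{pairs} — one has to be scrupulous about the rescaling by $c\tau + d$ acting simultaneously on the lattice $\Lambda^w$ and on the basis vectors, about the fact that $w' = w^{-1}$ differs from $w$ only by the scalar $s$ (so that passing to overlattices via $w'$ versus acting on $\H$ via $w$ give consistent answers), and about the orientation condition. Everything else is formal manipulation of quotients and of the group-theoretic fact that chains of generators realize the whole group.
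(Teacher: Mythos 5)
Your proposal is correct and follows exactly the route the paper intends: the paper gives no separate proof beyond the remark ``Restricting to pairs of the form $(E_\tau, (\tau,1))$ as above, we see the following,'' and your argument is precisely the fleshing-out of that remark (identify the $w$-identification with the fractional linear action of $w$ on $\H$, then observe that the generated equivalence relation cuts out the $\Gamma$-orbits). The only quibble is a sign convention: for a non-scalar $w$ with $w^2 = s$ one has $\det w = -s$, so positivity of the determinant is not literally ``$= s$''; but this looseness is already present in the paper and does not affect the argument.
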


\subsection{Over general base fields}\label{sec:modgen}

Now suppose that $\Gamma' \subset \SL_2 (\Z)$ is a congruence subgroup, a
situation that applies to our examples. Then another description is possible,
which lends itself to generalization to arbitrary fields. Suppose that the
full-level subgroup $\Gamma (N)$ is a subgroup of $\Gamma'$, and let
$\Gammabar'$ be the image of $\Gamma'$ in $\SL_2 (\Z / N \Z)$. Consider pairs
\begin{equation}\label{eq:modpairgen}
  (E, (\lambdabar_1, \lambdabar_2))
\end{equation}
where $E$ is an elliptic curve and where $(\lambdabar_1, \lambdabar_2)$ is
an ordered basis of the $N$-torsion group $E [N]$.

The group $\SL_2 (\Z / N \Z)$ acts on the left on the set of objects
\eqref{eq:modpairgen} as follows: if $\gamma' =
\smat{a}{b}{c}{d} \in \SL_2 (\Z / N \Z)$, then
\begin{equation}\label{eq:actiongen}
  \gamma' (E, (\lambdabar_1, \lambdabar_2))
  = (E, (a \lambdabar_1 + b \lambdabar_2,
         c \lambdabar_1 + d \lambdabar_2)) .
\end{equation}
Via this action, we also get an action of $\Gamma'$, which projects to $\SL_2
(\Z / N \Z)$. We say that two pairs $(E, (\lambdabar_1, \lambdabar_2))$, $(E',
(\lambdabar'_1, \lambdabar'_2))$ are $\Gamma'$-equivalent if and only if there
exists an isomorphism $\phi : E \to E'$ such that
\begin{equation}\label{eq:equivgen}
  (E', (\phi (\lambdabar_1), \phi (\lambdabar_2)))
  = \gamma' (E', (\lambdabar'_1, \lambdabar'_2))
\end{equation}
for some $\gamma'$ in $\Gamma'$. Then we again have the following.

\begin{proposition}
  The points of the algebraic curve $X (\Gamma')$ over an algebraically closed
  field $k$ bijectively correspond to the $\Gamma'$-equivalence classes of
  pairs \eqref{eq:modpairgen} over $k$.
\end{proposition}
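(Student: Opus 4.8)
The plan is to deduce this from the representability of the full level-$N$ moduli problem together with the realization of $X(\Gamma')$ as a finite quotient of the level-$N$ curve. Since $\Gamma(N)\subset\Gamma(N')$ whenever $N\mid N'$, I would first enlarge $N$ so that $N\ge 3$; an ordered basis of $E[N]$ is then a rigid structure, and the functor carrying a $\Z[1/N]$-scheme $T$ to the pairs $(E,(\lambdabar_1,\lambdabar_2))$ with $E/T$ an elliptic curve and $(\lambdabar_1,\lambdabar_2)$ an ordered basis of $E[N]$ is represented by a smooth affine curve $Y(N)$ over $\Z[1/N]$, with smooth compactification $X(N)$; see \cite{deligne-rapoport}. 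Thus for an algebraically closed field $k$ of characteristic prime to $N$ the set $Y(N)(k)$ is in bijection with the isomorphism classes of pairs \eqref{eq:modpairgen} over $k$. The group $\SL_2(\Z/N\Z)$ acts on $Y(N)$ and on $X(N)$ through \eqref{eq:actiongen}, and for $\Gamma'\supset\Gamma(N)$ I would take $X(\Gamma')$ over $k$ to be the quotient $\Gammabar'\backslash X(N)$ --- the a priori non-connected model, which over $\C$ is a disjoint union of copies of $\Gamma'\backslash\H^{*}$ indexed by the value of the Weil pairing $e_N(\lambdabar_1,\lambdabar_2)$, and which for this reason coincides with Deligne's non-connected model used earlier in the paper.

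The core step is then to pass to $k$-points. For $k$ algebraically closed, forming $k$-points commutes with the quotient of a $k$-variety by the finite group $\Gammabar'$, so $Y(\Gamma')(k)=\Gammabar'\backslash Y(N)(k)$. Unwinding, a point of $Y(\Gamma')(k)$ is an isomorphism class of pairs $(E,(\lambdabar_1,\lambdabar_2))$ over $k$, two being identified precisely when there is an isomorphism $\phi\colon E\to E'$ carrying $(\phi(\lambdabar_1),\phi(\lambdabar_2))$ to a $\Gammabar'$-translate of $(\lambdabar'_1,\lambdabar'_2)$; and since the action of $\Gamma'$ on the pairs \eqref{eq:modpairgen} is by construction induced by the surjection $\Gamma'\to\Gammabar'$, this identification is exactly $\Gamma'$-equivalence in the sense of \eqref{eq:equivgen}. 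Hence the non-cuspidal $k$-points of $X(\Gamma')$ are in bijection with the $\Gamma'$-equivalence classes of pairs \eqref{eq:modpairgen}; the remaining points are the cusps, adjoined exactly as in the archimedean statement of the first proposition of Section~\ref{sec:modular}, or, if a genuine modular description of them is wanted, corresponding to Néron $1$-gons with the evident level structure.

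Finally I would check consistency with that archimedean statement: reduction modulo $N$ of a positively oriented ordered $\Z$-basis of $H_1(E,\Z)$ yields an ordered basis of $E[N]$ via the canonical isomorphism $H_1(E,\Z)\otimes\Z/N\Z\cong E[N]$, landing in the component on which $e_N$ equals a fixed primitive root of unity; and since any two homology lifts of a given $E[N]$-basis differ by a matrix congruent to the identity modulo $N$, i.e. by an element of $\Gamma(N)\subset\Gamma'$, this reduction induces a bijection on $\Gamma'$-equivalence classes, recovering the first proposition on that component (the remaining components being Galois-conjugate copies). The step I expect to be the main obstacle is precisely the modular-curve input --- the representability of the level-$N$ problem by $Y(N)$ over $\Z[1/N]$ and the compatibility of the finite quotient $\Gammabar'\backslash X(N)$ with the formation of $k$-points, together with pinning down that $X(\Gamma')$ is to be read as this (non-connected) model; once those are granted, the remainder is bookkeeping relating $\Gamma'$ to its reduction $\Gammabar'$ and lifting through $\Gamma(N)$. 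One should also keep the implicit hypothesis $\operatorname{char}k\nmid N$, which is harmless in the characteristic-zero setting relevant to canonical models.
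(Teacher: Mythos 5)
The paper does not actually prove this proposition: it is stated without proof, with the general theory delegated to the reference \cite{deligne-rapoport} given at the head of Section~\ref{sec:modular}. Your argument is the standard way to supply the missing proof --- representability of the rigid level-$N$ problem by $Y(N)$ over $\Z[1/N]$, the identification of $X(\Gamma')$ with the finite quotient $\Gammabar'\backslash X(N)$, and the fact that forming $k$-points commutes with finite quotients over an algebraically closed field --- and it is essentially sound. Two substantive remarks. First, you have the containment backwards: if $N \mid N'$ then $\Gamma(N') \subset \Gamma(N)$, which is what you actually need to conclude $\Gamma(N') \subset \Gamma'$ after enlarging the level; and having enlarged $N$ to $N'$ you should also record why $\Gamma'$-equivalence classes of level-$N'$ pairs biject with those of level-$N$ pairs (multiplication by $N'/N$ on bases is surjective with fibres exactly the orbits of $\ker(\SL_2(\Z/N'\Z)\to\SL_2(\Z/N\Z))\subset\Gammabar'$) --- routine, but it is the step that makes the reduction legitimate. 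Second, you are right to insist that $X(\Gamma')$ must be read as the non-connected model: since $\SL_2$ preserves the Weil pairing, the set of $\Gamma'$-equivalence classes of pairs \eqref{eq:modpairgen} breaks into $\phi(N)$ pieces indexed by $e_N(\lambdabar_1,\lambdabar_2)$, so the bijection fails for the geometrically connected curve unless $N\le 2$; the paper is silent on this, and your reading is the one that makes the statement literally true. Your flagging of the hypothesis $\operatorname{char} k \nmid N$ is likewise a genuine correction to the statement as printed. The only superfluous worry is the cusps: the paper's convention is $X(\PGamma)=\PGamma\backslash\H$, so the archimedean proposition already excludes them and no adjunction is needed.
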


Note that for a non-algebraically closed field $k$, such an equivalence class
can define a $k$-rational point of $X (\Gamma')$ without its constituents being
$k$-rational; if $k$ is perfect, then it suffices that the class is closed
under conjugation by the absolute Galois group of $k$.

We can also give a description of the involution defined by an element $w \in
\Gamma$ with integral scalar square $s$ as above. We define $w'' = N w^{-1} \in
M_2 (\Z / N \Z)$. Say $w'' = \smat{a}{b}{c}{d}$. Then defining
\begin{equation}
  w'' (\lambdabar_1, \lambdabar_2)
  = (a \lambdabar_1 + b \lambdabar_2,
     c \lambdabar_1 + d \lambdabar_2) ,
\end{equation}
we let
\begin{equation}\label{eq:ewgen}
  E^w = E / \langle w'' ( \lambdabar_1, \lambdabar_2 ) \rangle .
\end{equation}
Let $s$ be the denominator of $w^{-1}$. We choose a basis
$(\widetilde{\lambdabar}_1, \widetilde{\lambdabar}_2)$ of $E [ N s ]$ in such a
way that $s \widetilde{\lambdabar}_i = \lambdabar_i$. Let $w''' = s w^{-1}$ and
define
\begin{equation}
  (\lambdabar_1^w, \lambdabar_2^w) = w''' (\widetilde{\lambdabar}_1,
  \widetilde{\lambdabar}_2) .
\end{equation}
which is an element of $E^w [N]$. Then the association
\begin{equation}
  (E, (\lambdabar_1, \lambdabar_2)) \mapsto
  (E^w, (\lambdabar_1^w, \lambdabar_2^w))
\end{equation}
gives rise to a well-defined map on $\Gamma'$-orbits, and gives the requested
modular description. The reader will readily formulate the analogue of
Proposition \ref{prop:XGamma}. Again, over perfect fields $k$ the rational
points of $X (\Gamma)$ parametrize equivalence classes that are closed under
Galois conjugation. 

In our concrete examples, we can give slightly more elegant prime-by-prime
descriptions of the moduli problems, essentially because when $\Gamma' =
\Gamma_0 (N)$ we obtain the classical description in terms of torsion subgroups
(instead of bases). We now proceed to give these descriptions, up to conjugacy.

\subsection{Case I}

In this case the order $\O''$ from the end of the previous section is the
intersection of the index $4$ order
\begin{equation}
  \O_4 = \langle
  \smat{1}{0}{0}{1},
  \smat{1}{0}{0}{-1},
  \smat{0}{1}{1}{1},
  \smat{0}{1}{-1}{1}
  \rangle
\end{equation}
and the index $5$ order
\begin{equation}
  \O_5 = \langle
  \smat{1}{0}{0}{0},
  \smat{0}{1}{0}{0},
  \smat{0}{0}{5}{0},
  \smat{0}{0}{0}{1}
  \rangle .
\end{equation}
We obtain a congruence subgroup of level $10$.

The corresponding modular description admits the following simplification: the
curve $X (\Gamma')$ parametrizes equivalence classes of triples
\begin{equation}\label{eq:modtrip}
  (E, H_5, (\lambdabar_1, \lambdabar_2))
\end{equation}
where $E$ is an elliptic curve, $H_5$ is a subgroup of $E [5]$ of order $5$,
and $(\lambdabar_1, \lambdabar_2)$ is a basis of $E [2]$. Two such triples $(E,
H_5, (\lambdabar_1, \lambdabar_2))$ and $(E', H'_5, (\lambdabar'_1,
\lambdabar'_2))$ are equivalent if and only there exists an isomorphism $\phi :
E \to E'$ that maps $H_5$ into $H'_5$ and such that $\phi (\lambdabar_1,
\lambdabar_2) = \alpha (\lambdabar'_1, \lambdabar'_2)$, where $\alpha'$ is a
power of $\smat{0}{1}{1}{1} \in \SL_2 (\F_2)$.

There is a single involution defined by the element
\begin{equation}
  w_5 = \smat{0}{-1}{5}{0}
\end{equation}
of determinant $5$. In terms of the triples \eqref{eq:modtrip}, its action can
be described by
\begin{equation}
  w_5 (E, H_5, (\lambdabar_1, \lambdabar_2))
  = (E', H'_5, (\lambdabar'_1, \lambdabar'_2)) ,
\end{equation}
where $E'$ is the quotient of $E$ by $H_5$, $H'_5$ is the image of $E [5]$ on
$E'$, and $(\lambdabar'_1, \lambdabar'_2)$ is the image of $(\lambdabar_2,
\lambdabar_1)$ on $E'$.

\subsection{Case II}

In this case the order $\O''$ from the end of the previous section is the
intersection of the index $8$ order
\begin{equation}
  \O_8 = \langle
  \smat{1}{0}{0}{1},
  \smat{1}{0}{0}{-1},
  \smat{0}{0}{2}{0},
  \smat{0}{2}{1}{0}
  \rangle
\end{equation}
and the index $3$ order
\begin{equation}
  \O_3 = \langle
  \smat{1}{0}{0}{0},
  \smat{0}{1}{0}{0},
  \smat{0}{0}{3}{0},
  \smat{0}{0}{0}{1}
  \rangle .
\end{equation}
We obtain a congruence subgroup of level $12$. Its elements parametrize triples
\begin{equation}
  (E, H_3, (\lambdabar_1, \lambdabar_2)) ,
\end{equation}
where this time $H_3$ is a subgroup of order $3$ of $H_3$, and where
$(\lambdabar_1, \lambdabar_2)$ is a basis of $E [4]$ up to the equivalence
defined by the elements of the order in $\SL_2 (\Z / 4 \Z)$ determined by the
elements in the basis of $\O_8$.

There are three involutions, defined by the elements
\begin{equation}
  \begin{aligned}
    w_2 & = \smat{-2}{-2}{3}{2}, \\
    w_3 & = \smat{-3}{-4}{3}{3}, \\
    w_6 & = \smat{0}{2}{-3}{0}, \\
  \end{aligned}
\end{equation}
of determinant $2$, $3$, $6$ respectively. Note that these elements only
commute modulo the group $\O''^1$.

The action of the involution $w_2$ is given by
\begin{equation}
  w_2 (E, H_3, (\lambdabar_1, \lambdabar_2))
  = (E', H'_3, (\lambdabar'_1, \lambdabar'_2)) ,
\end{equation}
where $E'$ is the quotient of $E$ by $2 \lambdabar_1$, where $H'_3$ is the
image of $H_3$ on $E'$, and where $(\lambdabar'_1, \lambdabar'_2)$ is some
point on $E'$ such that $(\lambdabar'_1, 2 \lambdabar'_2)$ equals the image of
$(\lambdabar_2, -\lambdabar_1)$ on $E'$.

The action of the involution $w_2$ is given by
\begin{equation}
  w_3 (E, H_3, (\lambdabar_1, \lambdabar_2))
  = (E', H'_3, (\lambdabar'_1, \lambdabar'_2)) ,
\end{equation}
where $E'$ is the quotient of $E$ by $H_3$, where $H'_3$ is the image of $E
[3]$ on $E'$, and where $(\lambdabar'_1, \lambdabar'_2)$ is the image of
$(\lambdabar_1, \lambdabar_1 - \lambdabar_2)$ on $E'$.

The action of the involution $w_6$ is the composition of that of $w_2$ and
$w_3$. It is given by
\begin{equation}
  w_6 (E, H_3, (\lambdabar_1, \lambdabar_2))
  = (E', H'_3, (\lambdabar'_1, \lambdabar'_2)) ,
\end{equation}
where $E'$ is the quotient of $E$ by the group generated by $H_3$ and $2
\lambdabar_1$, where $H'_3$ is the image of $E [3]$ on $E'$, and where
$(\lambdabar'_1, \lambdabar'_2)$ is some point on $E'$ such that
$(\lambdabar'_1, 2 \lambdabar'_2)$ equals the image of $(\lambdabar_2,
\lambdabar_1)$ on $E'$.

\subsection{Case III}

In this case the order $\O''$ is of index $32$, given by
\begin{equation}
  \O_{32} = \langle
  \smat{1}{0}{0}{1},
  \smat{0}{2}{-2}{0},
  \smat{2}{-1}{1}{-2},
  \smat{0}{1}{3}{0}
  \rangle
\end{equation}
We obtain a congruence subgroup of level $8$. There is a single involution
defined by the element
\begin{equation}
  w_2 = \smat{-1}{-3}{1}{1}
\end{equation}
of determinant $2$. In this case we get a modular description in terms of the
$8$-torsion for which a simpler description than the generic one in Section
\ref{sec:modgen} does not seem readily available.

\subsection{Case IV}

We consider the order $\O''$ obtained by enlarging $\Gamma^{(2)}$. This is the
intersection of the index $4$ order
\begin{equation}
  \O_4 = \langle
  \smat{1}{0}{0}{1},
  \smat{1}{0}{0}{-1},
  \smat{0}{1}{1}{1},
  \smat{0}{1}{-1}{1}
  \rangle
\end{equation}
and the index $9$ order
\begin{equation}
  \O_9 = \langle
  \smat{1}{0}{0}{1},
  \smat{0}{1}{0}{1},
  \smat{1}{1}{0}{-1},
  \smat{0}{0}{3}{0}
  \rangle .
\end{equation}
We obtain a congruence subgroup of level $6$, from which we can obtain $\Gamma$
by adding the commutators $\gamma_1 = \smat{1}{1}{2}{1}$ and $\gamma_2 =
\smat{1}{-1}{-2}{1}$. This group corresponds to the unique normal subgroup of
$\SL_2 (\Z / 6 \Z)$ that gives rise to a quotient that is cyclic of order $6$.
Using the corresponding character makes it easier to check the equivalence
\eqref{eq:equivgen}.

\bibliographystyle{plain}
\bibliography{canmod-1inf.bib}

\end{document}